\def\bbordermatrix#1{\begingroup \m@th
  \@tempdima 4.75\p@
  \setbox\z@\vbox{%
    \def\cr{\crcr\noalign{\kern2\p@\global\let\cr\endline}}%
    \ialign{$##$\hfil\kern2\p@\kern\@tempdima&\thinspace\hfil$##$\hfil
      &&\quad\hfil$##$\hfil\crcr
      \omit\strut\hfil\crcr\noalign{\kern-\baselineskip}%
      #1\crcr\omit\strut\cr}}%
  \setbox\tw@\vbox{\unvcopy\z@\global\setbox\@ne\lastbox}%
  \setbox\tw@\hbox{\unhbox\@ne\unskip\global\setbox\@ne\lastbox}%
  \setbox\tw@\hbox{$\kern\wd\@ne\kern-\@tempdima\left[\kern-\wd\@ne
    \global\setbox\@ne\vbox{\box\@ne\kern2\p@}%
    \vcenter{\kern-\ht\@ne\unvbox\z@\kern-\baselineskip}\,\right]$}%
  \null\;\vbox{\kern\ht\@ne\box\tw@}\endgroup}
\newtheorem{theorem}{Theorem}[section]
\newtheorem{lemma}[theorem]{Lemma}
\newtheorem{corol}[theorem]{Corollary}
\theoremstyle{definition}
\newtheorem{definition}[theorem]{Definition}
\newtheorem{example}[theorem]{Example}
\theoremstyle{remark}
\newtheorem{remark}[theorem]{Remark}
\newcommand{\defn}[1]{{\em #1}}
\def\0{{\bm 0}}   
\title{Embeddable partial Hadamard matrices related to the projective planes}
\author{
 Hadi Kharaghani\thanks{Department of Mathematics and Computer Science, University of Lethbridge,
Lethbridge, Alberta, T1K 3M4, Canada. \texttt{kharaghani@uleth.ca}}
\and
  Sho Suda\thanks{Department of Mathematics,  National Defense Academy of Japan, Yokosuka, Kanagawa 239-8686, Japan. \texttt{ssuda@nda.ac.jp}}\and
Yash Shamsundar Khobragade\thanks{Indian Institute of Science Education and Research (IISER) Bhopal, India. \texttt{yashk19@iiserb.ac.in}}
}
\date{\today}
\begin{document}
\maketitle

\begin{abstract}
The existence of a projective plane of order $p\equiv3\pmod{4}$, where $p$ is a prime power, is shown to be equivalent to the existence of a balancedly multi-splittable embeddable $p^2\times p(p+1)$ partial Hadamard matrix. 

\end{abstract}

\section{Introduction}
Balancedly splittable Hadamard (BSH) matrices were introduced by Kharaghani-Suda in \cite{split-h}. 
The definition is broad in scope in \cite{split-h}. Still, in this paper, we will consider the following definition: a Hadamard matrix of order $n^2$ is balancedly splittable if it is Hadamard equivalent to the block form $\begin{bmatrix}H_1 & H_2\end{bmatrix}$, where $H_1$ is a $n^2\times \frac{1}{2}(n^2-n)$ submatrix, and all the off-diagonal entries of $H_1H_1^\top$ are of the absolute values $\frac{n}{2}$.
The plentiful of such matrices was shown in an extension to balancedly splittable orthogonal designs (BSOD) by Kharaghani-Pender-Suda in \cite{split-od} every BSH matrix of order $n^2$ leads to two optimal sets of equiangular lines and unbiased bases. In a recent paper \cite{KS}, Kharaghani-Suda introduced a class of balancedly multi-splittable Hadamard matrices (BMSH) and demonstrated that their existence is equivalent to projective planes of doubly even order. Such a property limits the existence of BMSH, possibly only to those of order $4^m$. 

In this work, the focus is on the prime powers $p\equiv3\pmod{4}$. Plenty of balancedly multi-splittable partial Hadamard (BMSPH) matrices exist for the prime powers. Such matrices are row and column regular and embeddable in a Hadamard matrix. These features will be used to introduce a class of balanced multi-splittable BIBD (BMSBIBD). The paper's main result is a complete characterization of BMSPH matrices and BMSBIBD in terms of orthogonal arrays and projective planes.

\section{Balancedly multi-splittable Hadamard matrices for $p\equiv3\pmod{4}$}
Let $p$ be a positive integer such that $p\equiv3\pmod{4}$.   
Let $H$ be a $p^2\times p(p+1)$ partial Hadamard matrix. 
A $p^2\times p(p+1)$ partial Hadamard matrix $H$ is said to be \defn{balancedly multi-splittable} if there is a block form of $H=\begin{bmatrix}
H_1 & \cdots & H_{p+1}  
\end{bmatrix}$ 
such that each $H_i$ is a $p^2\times p$ matrix and $H$ is balancedly spllitable with respect to a submatrix $\begin{bmatrix}
 H_{i_1} & \cdots & H_{i_{(p+1)/2}}  
\end{bmatrix}$ for any $(p+1)/2$-elements subset $\{i_1,\ldots,i_{(p+1)/2}\}$ of $\{1,2,\ldots,p+1\}$.

\begin{theorem}\label{thm:main}
Let $p$ be a positive integer such that $p\equiv3\pmod{4}$.    
The following are equivalent. 
\begin{enumerate}
\item There exists a balancedly multi-splittable $p^2\times p(p+1)$ partial Hadamard matrix. 
\item There exist an OA$_1(p^2,p+1,p,2)$ and a $p\times(p+1)$ partial Hadamard matrix.  
\end{enumerate}
\end{theorem}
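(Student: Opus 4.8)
The plan is to prove the two implications separately: $(2)\Rightarrow(1)$ by an explicit construction, and $(1)\Rightarrow(2)$ by extracting the two objects from the combinatorics forced by balanced multi-splittability. For $(2)\Rightarrow(1)$, start from an OA$_1(p^2,p+1,p,2)$, viewed as an array $A=(a_{rc})$ with $r\in\{1,\dots,p^2\}$, $c\in\{1,\dots,p+1\}$, entries in $\{1,\dots,p\}$, and a $p\times(p+1)$ partial Hadamard matrix $K$. First normalise $K$ by row negations so that its first column is $\j$, and write $K=[\,\j\mid F\,]$; then $F$ is a $p\times p$ $\pm1$ matrix with $FF^\top=(p+1)I_p-J_p$. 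For each column $c$ let $E_c$ be the $p^2\times p$ level-indicator matrix, $(E_c)_{r,\ell}=1$ iff $a_{rc}=\ell$, and set $H_c=E_cF$ and $H=[\,H_1\mid\cdots\mid H_{p+1}\,]$. I would then check: $HH^\top=(p+1)\sum_c E_cE_c^\top-(p+1)J_{p^2}=p(p+1)I_{p^2}$, using the strength-$2$, index-$1$ identity $\sum_c E_cE_c^\top=pI_{p^2}+J_{p^2}$ (any two distinct rows of $A$ agree in exactly one column); and for any $\tfrac{p+1}{2}$-subset $S$, $H_SH_S^\top=(p+1)\sum_{c\in S}E_cE_c^\top-\tfrac{p+1}{2}J_{p^2}$ has every off-diagonal entry equal to $\pm\tfrac{p+1}{2}$, according to whether the unique column in which two given rows agree lies in $S$. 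Thus $H$ is a balancedly multi-splittable $p^2\times p(p+1)$ partial Hadamard matrix.

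\paragraph{The hard direction $(1)\Rightarrow(2)$: pinning down the blocks.}
Given such an $H=[\,H_1\mid\cdots\mid H_{p+1}\,]$, write $m_c(r,r')=(H_cH_c^\top)_{r,r'}$ for $r\neq r'$. These are odd integers of absolute value $\le p$ with $\sum_c m_c(r,r')=0$, and balanced splittability says $\bigl|\sum_{c\in S}m_c(r,r')\bigr|$ is independent of $(r,r')$ for each half-set $S$. Comparing $S$ with a one-element swap shows $m_y(r,r')-m_x(r,r')$ lies in a three-element set independent of $(r,r')$; a short elementary argument (using $|m_c|\le p$ and oddness) then forces the vector $\bigl(m_1(r,r'),\dots,m_{p+1}(r,r')\bigr)$ to be a permutation of either $(p,-1,\dots,-1)$ or $(-p,1,\dots,1)$. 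Negating a row of $H$ negates all $m_c$ through that row and swaps these two "types"; a three-row calculation shows the type function $f(r,r')\in\{0,1\}$ satisfies the cocycle identity $f(r,r')+f(r,r'')+f(r',r'')\equiv 0$, so after one row-negation I may assume every vector is of the first type, i.e.\ $m_c(r,r')\in\{p,-1\}$ for all $c$. Then $H_cH_c^\top=(p+1)B_c-J_{p^2}$, where $B_c$ is the $0/1$ relation matrix of the partition $\sim_c$ of the rows of $H$ by equality of the corresponding rows of $H_c$; and $\sum_c H_cH_c^\top=p(p+1)I_{p^2}$ forces $\sum_c B_c=pI_{p^2}+J_{p^2}$, i.e.\ the partitions $\sim_1,\dots,\sim_{p+1}$ form a ``net'': every pair of rows lies together in exactly one $\sim_c$.

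\paragraph{From the net to the OA and the partial Hadamard matrix.}
The remaining task is to upgrade this to a genuine $(p,p+1)$-net: each $\sim_c$ has exactly $p$ classes, all of size $p$. Here I would use that parts of two distinct partitions meet in at most one point (a pair lies together in only one partition), so a class of size $s$ forces every other partition to have at least $s$ classes; combined with the trace identity $\sum_c\|\mathbf{s}^c\|_2^2=p^3(p+1)$ — obtained by equating $\operatorname{tr}((MM^\top)^2)$ and $\operatorname{tr}((M^\top M)^2)$ for the point–class incidence matrix $M$, which satisfies $MM^\top=pI_{p^2}+J_{p^2}$ — and a convexity/extremal argument, the class sizes are pinned to $p$. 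A $(p,p+1)$-net is exactly an OA$_1(p^2,p+1,p,2)$. Finally, the $p$ distinct rows of any single block $H_c$ have pairwise inner product $-1$, so they form a $p\times p$ matrix $F$ with $FF^\top=(p+1)I_p-J_p$, and $[\,\j\mid F\,]$ is the required $p\times(p+1)$ partial Hadamard matrix.

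\paragraph{Where the difficulty lies.}
The main obstacle is the "net $\Rightarrow$ uniform net'' step. Reducing the off-diagonal entries of each $H_cH_c^\top$ to $\{p,-1\}$ already needs the type analysis plus the cocycle normalisation, but the genuinely delicate point is excluding partitions with unequal class sizes or with $p+1$ classes: the pair-counting and trace identities alone do not forbid these, so one has to combine the bound "a class of size $s$ needs at least $s$ classes in every other partition'' with $\sum_c\|\mathbf{s}^c\|_2^2=p^3(p+1)$ and an extremal estimate, handling the case where several partitions have $p+1$ classes separately — for instance via $\operatorname{rank}(B_c|_{\j^\perp})=t_c-1$ together with $\sum_c B_c|_{\j^\perp}=pI_{p^2-1}$. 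Everything else is routine manipulation with the identities $\sum_c E_cE_c^\top=pI+J$ and $FF^\top=(p+1)I-J$.
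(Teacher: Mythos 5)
Your direction $(2)\Rightarrow(1)$ is exactly the construction the paper borrows from \cite{KS} (blocks $H_c=E_cF$ with $F F^\top=(p+1)I_p-J_p$), so nothing to flag there. For $(1)\Rightarrow(2)$ your route to the key fact that the vector $(m_1,\ldots,m_{p+1})$ of blockwise inner products of two rows is a permutation of $(p,-1,\ldots,-1)$ or $(-p,1,\ldots,1)$ is genuinely different from the paper's: the paper writes the half-set conditions as $W{\bm b}=\chi$ and invokes the width/dual-width theorem for the Johnson scheme \cite[Theorem~8]{BGKM}, whereas your swap argument is elementary and does close: $m_y-m_x\in\{0,\pm(p+1)\}$ forces at most two distinct values differing by $p+1$ (three values would give a difference $2(p+1)$), and then counting how many large values a half-set can contain, together with $\sum_c m_c=0$, pins the vector to the two stated patterns. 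That is a real simplification over the paper. Your cocycle normalisation is equivalent to the paper's Claim~2: the ``three-row calculation'' is precisely the count $a=b=c=(p-1)/4$, and you should state explicitly that this is where $p\equiv3\pmod 4$ enters, and that ``one row-negation'' means negating the set $\{r: g(r)=1\}$ after writing $f(r,r')=g(r)+g(r')$, which the cocycle identity permits.

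The genuine soft spot is the step you yourself single out, ``net $\Rightarrow$ uniform net''. As proposed it does not close: the identity $\sum_c\|\mathbf{s}^c\|_2^2=p^3(p+1)$ together with the bounds ``at most $p+1$ classes per partition'' and ``class sizes at most $p+1$'' does not force uniformity --- a partition with $p+1$ classes of very unequal sizes (say $p-2$ classes of size $p+1$) already has sum of squares exceeding $p^3$, so the convexity/extremal estimate cannot by itself rule out the non-uniform configurations, and ``handling the case of $p+1$ classes separately'' is left without an argument (the rank bound $\sum_c\operatorname{rank}\geq p^2-1$ is far too weak). The gap is fixable with ingredients you already have: if some class $L$ had $|L|=p+1$, pick a point $x\notin L$; the pairs $\{x,y\}$, $y\in L$, must be covered by the classes through $x$ in the $p$ partitions other than the one containing $L$ (the class through $x$ in that partition is disjoint from $L$), and each such class meets $L$ in at most one point, covering at most $p$ of the $p+1$ points of $L$ --- a contradiction. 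Hence all class sizes are at most $p$, and the per-point count $\sum_c s_c(x)=p^2+p$ with $p+1$ summands each at most $p$ forces every class to have size exactly $p$ and every partition exactly $p$ classes; from there your extraction of the OA and of $\begin{bmatrix}{\bm 1}& F\end{bmatrix}$ coincides with the paper's $K_i$. (To be fair, the paper is equally terse at this very point, asserting the conclusion of its Claim~3 directly from $\sum_i H_iH_i^\top=p(p+1)I$.)
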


\begin{theorem}\label{thm:main2}
Let $p$ be a positive integer such that $p\equiv3\pmod{4}$.    
Let $H$ be a balancedly multi-splittable $p^2\times p(p+1)$ partial Hadamard matrix. 
\begin{enumerate}
\item There exist signed permutation matrices $P$ of order $p^2$ and $Q$ of order $p(p+1)$ such that $H':=PHQ$ is row-regular and column-regular. 
\item $H'$ is embeddable in a Hadamard matrix of order $p(p+1)$.    
\end{enumerate}
\end{theorem}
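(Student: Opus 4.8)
The plan is to bring $H$, by means of signed permutations, to a rigid normal form governed by the affine plane of order $p$ that Theorem~\ref{thm:main} attaches to it, and then to read both assertions off that form. Write $G_i:=H_iH_i^\top$, so $G_1+\cdots+G_{p+1}=HH^\top=p(p+1)I_{p^2}$, each $G_i$ has all diagonal entries $p$, and each off-diagonal entry of $G_i$ is an integer $\equiv p\pmod 2$ lying in $[-p,p]$. The first, and hardest, task is to prove that after multiplying suitable rows of $H$ by $-1$ — i.e.\ left-multiplying $H$ by a diagonal $\pm1$ matrix $P_0$ — one has, for every pair of distinct rows $P,Q$, that the multiset $\{(G_i)_{P,Q}:1\le i\le p+1\}$ equals $\{p\}\cup\{-1,\dots,-1\}$. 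The argument I have in mind compares two half-selections $S,S'$ differing in a single block: since $(H_SH_S^\top)_{P,Q}=\sum_{i\in S}(G_i)_{P,Q}$ has one and the same absolute value $c$ over all half-selections $S$, the pairwise differences of the $p+1$ numbers $(G_i)_{P,Q}$ all lie in $\{0,\pm 2c\}$, so those numbers take at most two distinct values; combining the two multiplicities with $\sum_i(G_i)_{P,Q}=0$ and $|(G_i)_{P,Q}|\le p$ then forces the multiset to be $\{p,-1,\dots,-1\}$ and $c$ to be $(p+1)/2$, the choice of $P_0$ serving to rule out the mirror possibility $\{-p,1,\dots,1\}$. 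Consequently each block $H_i$ of $P_0H$ has exactly $p$ distinct rows, each occurring $p$ times; the occurrence pattern is a partition $\pi_i$ of the $p^2$ rows into $p$ parts of size $p$, any two rows lie in a common part of exactly one $\pi_i$, and hence $\pi_1,\dots,\pi_{p+1}$ are the parallel classes of an affine plane of order $p$ (this is precisely the orthogonal array of Theorem~\ref{thm:main}). Collecting the $p$ distinct rows of $H_i$ into a $p\times p$ matrix $R_i$ with rows indexed by the parts of $\pi_i$, the inner-product data becomes $R_iR_i^\top=(p+1)I_p-J_p$.

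Granting this normal form, (i) follows quickly. The column-sum vector $R_i^\top\mathbf 1_p$ has $\|R_i^\top\mathbf 1_p\|^2=\mathbf 1_p^\top\big((p+1)I_p-J_p\big)\mathbf 1_p=p$ and all entries odd, so each column sum of $R_i$ is $\pm1$. Let $Q$ be the diagonal $\pm1$ matrix which, within each block, multiplies the $\ell$-th column of $H_i$ by the sign of that column's sum; equivalently it replaces $R_i$ by $R_iF_i$ with $F_i:=\mathrm{diag}(R_i^\top\mathbf 1_p)$. Then $\mathbf 1_p^\top(R_iF_i)=\mathbf 1_p^\top$ and $(R_iF_i)\mathbf 1_p=R_i(R_i^\top\mathbf 1_p)=\big((p+1)I_p-J_p\big)\mathbf 1_p=\mathbf 1_p$, so every $R_iF_i$ has all row sums and all column sums equal to $1$. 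Hence $H':=P_0HQ$ has constant row sum $\sum_{i=1}^{p+1}1=p+1$ and constant column sum $p\cdot 1=p$, which is (i) (take $P:=P_0$; permutations may be appended for a tidier layout). For the next step relabel $R_iF_i$ as $R_i$, so that now $R_iR_i^\top=(p+1)I_p-J_p$ and all row and column sums of $R_i$ equal $1$.

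For (ii) I would compute the null space of $H'$ in $\mathbb R^{p(p+1)}$. Write $z=(z_1,\dots,z_{p+1})$ with $z_i\in\mathbb R^p$ indexing the columns of the $i$-th block. A short calculation (using invertibility of the $R_i$) shows that $z$ is in the null space of $H'$ iff $f_1+\cdots+f_{p+1}=0$, where $f_i$ denotes the lift of $R_iz_i$ to the $p$-dimensional space $V_i$ of functions on the $p^2$ rows that are constant on the parts of $\pi_i$. The point–line incidence matrix $N$ of the affine plane satisfies $N^\top N=pI_{p^2}+J_{p^2}$, which is nonsingular, so $\bigcap_iV_i^\perp=0$, i.e.\ $\sum_iV_i=\mathbb R^{p^2}$; therefore the kernel of $(f_i)\mapsto\sum_if_i$ on $\prod_iV_i$ has dimension $(p+1)p-p^2=p$ and thus consists exactly of the constant tuples $f_i=c_i\mathbf 1$ with $c_1+\cdots+c_{p+1}=0$. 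Since $R_i^{-1}\mathbf 1_p=R_i^\top(R_iR_i^\top)^{-1}\mathbf 1_p=R_i^\top\mathbf 1_p=\mathbf 1_p$, this translates into: $z$ lies in the null space of $H'$ iff $z=(c_1\mathbf 1_p,\dots,c_{p+1}\mathbf 1_p)$ with $c_1+\cdots+c_{p+1}=0$. Such a $z$ is $\pm1$-valued precisely when $c:=(c_1,\dots,c_{p+1})\in\{\pm1\}^{p+1}$ is orthogonal to $\mathbf 1_{p+1}$, and two such $z$'s are orthogonal iff the corresponding $c$'s are. So adjoining $p$ rows to $H'$ to form a Hadamard matrix of order $p(p+1)$ is equivalent to finding $p$ mutually orthogonal $\pm1$ vectors in $\mathbf 1_{p+1}^\perp$, i.e.\ to the existence of a Hadamard matrix of order $p+1$. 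Finally, Theorem~\ref{thm:main} applied to $H$ produces a $p\times(p+1)$ partial Hadamard matrix $K$; the unique (up to sign) row $w$ one may adjoin so that $[K;w]$ is square with $[K;w][K;w]^\top=(p+1)I_{p+1}$ necessarily satisfies $p+w_j^2=p+1$ for every $j$, so $w$ is $\pm1$-valued and $[K;w]$ is a Hadamard matrix of order $p+1$. Thus the required $p$ extra rows exist and (ii) follows.

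The crux is Step~1: once the affine-plane normal form and the relations $R_iR_i^\top=(p+1)I_p-J_p$ are available, (i) and (ii) are the short linear algebra arguments above. Nailing the common off-diagonal absolute value as $(p+1)/2$, choosing the global row signs $P_0$ so as to exclude the mirror multiset $\{-p,1,\dots,1\}$, and verifying that each $\pi_i$ has exactly $p$ parts of size $p$ are the delicate points; these are precisely the content of the implication $(1)\Rightarrow(2)$ of Theorem~\ref{thm:main}, so in practice I would establish Step~1 alongside, or as a consequence of, that implication.
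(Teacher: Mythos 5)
Your proposal is correct. Part (i) coincides with the paper's argument: both rely on the structure extracted in the proof of Theorem~\ref{thm:main} (after a global sign normalization of rows, each block $H_i$ has $p$ distinct rows, each repeated $p$ times, with $L_iL_i^\top=(p+1)I_p-J_p$ and any two rows of $H$ agreeing in exactly one block), and both fix the column signs of the $i$-th block by the vector of column sums of $L_i$, which is a $(1,-1)$-vector because its squared norm is $p$ and its entries are odd; your $F_i$ is exactly the paper's $D_i$. For part (ii) you end up with the same bordered matrix as the paper — $p$ extra rows whose $i$-th block is a constant $\pm\mathbf 1_p$, the sign pattern across blocks running over the non-all-ones rows of a normalized Hadamard matrix of order $p+1$ — but you justify it differently: the paper simply writes these rows down (using the order-$(p+1)$ Hadamard matrices $\tilde L_i$ already produced in part (i)) and checks orthogonality, whereas you compute the full right null space of $H'$ via the resolution structure of the affine plane (the incidence matrix satisfies $NN^\top=pI_{p^2}+J_{p^2}$, hence $\sum_i V_i=\mathbb R^{p^2}$ and the kernel consists exactly of the block-constant vectors with $c_1+\cdots+c_{p+1}=0$), which shows in addition that this is the \emph{only} possible way to complete $H'$, i.e.\ embeddability is equivalent to the existence of a Hadamard matrix of order $p+1$; you then obtain that matrix by the standard unique completion of the $p\times(p+1)$ partial Hadamard matrix supplied by Theorem~\ref{thm:main}, rather than from $\tilde L_i$. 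Your route is longer but buys a necessity/uniqueness statement the paper does not record; the paper's is shorter because the order-$(p+1)$ Hadamard matrix is already in hand from part (i).

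Two small caveats, neither affecting correctness of the theorem's proof. In your sketch of Step 1, two-valuedness of the $(G_i)_{P,Q}$ together with zero sum and $|(G_i)_{P,Q}|\le p$ does not by itself force the multiplicities to be $1$ and $p$; you still need that \emph{every} half-selection sums to $\pm(p+1)/2$ (which simultaneously pins the common value $c$), and the passage from the pairwise dichotomy $\{p,-1,\dots,-1\}$ versus $\{-p,1,\dots,1\}$ to a single global sign choice $P_0$ is exactly Claim~2 in the paper's proof of Theorem~\ref{thm:main}, where $p\equiv3\pmod 4$ is used — since you defer Step 1 to that proof, as the paper itself does, this is fine, but the quoted one-line justification would not stand alone. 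Also, with $N$ the points-by-lines incidence matrix it is $NN^\top=pI_{p^2}+J_{p^2}$ (your $N^\top N$ corresponds to the lines-by-points convention); the rank conclusion is unaffected.
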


\subsection{Proof of Theorem~\ref{thm:main}}
The proof from (ii) to (i) is the same as \cite{KS}. 

\begin{proof}[Proof of (i) $\Rightarrow$ (ii)]

Assume that $H$ is a balancedly multi-splittable $p^2\times p(p+1)$ partial Hadamard matrix with respect to the following block form: 
$$
H=\begin{bmatrix} H_1 & \cdots & H_{p+1}  
\end{bmatrix},
$$ 
where each $H_i$ is a $p^2\times p$ matrix. 


Consider two rows indexed by $x$ and $y$. 
Write the $x$-th row of $H_i$ as $r_{x,i}$ and similar to $y$. 
Define $a_i=\langle r_{x,i}, r_{y,i}\rangle$ for $i\in\{1,\ldots,p+1\}$. 
We then show that $a_i\in \{p,-1\}$ for any $i$ or $a_i\in \{-p,1\}$ for any $i$.  

By the definition of multi-splittable, for any $I\in \binom{[p+1]}{(p+1)/2}$
\begin{align}\label{eq:p1}
\sum_{i\in I}a_i\in\{\pm (p+1)/2\},  
\end{align}
where $[p+1]=\{1,\ldots,p+1\}$ and $\binom{[p+1]}{(p+1)/2}$ is the set of $(p+1)/2$-elements  subsets of $[p+1]$. 
We formulate these equations as a matrix equation as follows. 
Define a $\binom{p+1}{(p+1)/2}\times (p+1)$ matrix $W$ with rows indexed by the elements of $\binom{[p+1]}{(p+1)/2}$ and the columns indexed by the elements of $[p+1]$ as 
\[
W_{I,i}=\begin{cases}
1 & \text{ if } i\in I,\\
0 & \text{ otherwise},
\end{cases}
\]
and set a column vector ${\bm a}=(a_i)_{i\in [p+1]}$. 
Then the equations \eqref{eq:p1} read as
\begin{align}\label{eq:p2}
W{\bm a}=(p+1)\chi-\frac{p+1}{2}{\bm 1}
\end{align}
for some $(0,1)$-column vector $\chi$, where ${\bm 1}$ is the all-ones column vector. 
Letting ${\bm b}=\frac{1}{p+1}({\bm a}+{\bm 1})$, the equation \eqref{eq:p2} is 
\begin{align}\label{eq:p3}
W{\bm b}=\chi. 
\end{align}
Let $C$ be a subset of $\binom{[p+1]}{(p+1)/2}$ whose characteristic vector is $\chi$. 
Since the column vectors of $W$ are linearly independent,  it is easy to see that the following hold: 
\begin{itemize}
\item $\chi={\bm 0}$ if and only if ${\bm b}={\bm 0}$, 
\item $\chi={\bm 1}$ if and only if ${\bm b}=\frac{2}{p+1}{\bm 1}$. 
\end{itemize}
From now, we assume that $\chi$ is neither ${\bm 0}$ nor ${\bm 1}$. 
Then:

{\bf Claim 1}: For any $I\in \binom{[p+1]}{(p+1)/2}$, $\chi(I)\neq \chi(\binom{[p+1]}{(p+1)/2}\setminus I)$. 
\begin{proof}[Proof of Claim 1]
Without loss of generality, we may assume that $I=\{1,\ldots,(p+1)/2\}$. 
Assume to the contrary that $\chi(I)= \chi(\binom{[p+1]}{(p+1)/2}\setminus I)$. 
First we deal with the case $\chi(I)=0$ and thus $\chi(\binom{[p+1]}{(p+1)/2}\setminus I)=0$. 
Then 
\[
0=\chi(I)=\sum_{i=1}^{(p+1)/2}b_i, 
\]
where ${\bm b}=(b_i)_{i=1}^{p+1}$. 
For any $i\in \{1,\ldots,(p+1)/2\}$, consider $I'=I\cup\{i+(p+1)/2\}\setminus \{i\}$. 
Then 
\[
\chi(I')=\chi(I)-b_i+b_{i+(p+1)/2}.
\]
Since $\chi(I')\in\{0,1\}$, $b_{i+(p+1)/2}$ equals either $b_i$ or $b_i+1$. 
Set $b_{i+(p+1)/2}=b_i+\varepsilon_i$. 
Then 
\begin{align*}
0&=\chi\left(\binom{[p+1]}{(p+1)/2}\setminus I\right)\\
&=\sum_{i=1}^{(p+1)/2}b_{i+(p+1)/2}\\
&=\sum_{i=1}^{(p+1)/2}(b_i+\varepsilon_i)\\
&=\chi(I)+\sum_{i=1}^{(p+1)/2}\varepsilon_i\\
&=\sum_{i=1}^{(p+1)/2}\varepsilon_i.  
\end{align*}
Since each $\varepsilon_i \in\{0,1\}$, $\varepsilon_i=0$ and $b_{i+(p+1)/2}=b_i$ hold for each $i$. 
Doing the above argument for any bijection $\varphi:\{1,\ldots,(p+1)/2\}\rightarrow \{1+(p+1)/2,\ldots,p+1\}$, we obtain $b_{\varphi(i)}=b_i$ for any $i$. Therefore there exists a constant $c$ such that $b_i=c$ for any $i\in \{1,\ldots,p+1\}$. 
Since $\chi(I)=0$, we obtain $c=0$ and thus $\chi={\bm 0}$, a contradiction. 
The case of $\chi(I)=1$ is similar. 
\end{proof}
 By Claim 1, the width of the subset $C$ is at most $w:=(p+1)/2-1$ in the vertex set of the Johnson scheme $J(p+1,(p+1)/2)$ \cite{BGKM}. 
 On the other hand, since $W{\bm b}=\chi$, $E_i\chi={\bm 0}$ for $j\in\{2,\ldots,(p+1)/2\}$ where $E_i$ is the primitive idempotent of the Johnson scheme. 
 Therefore, the dual width of $C$ is $w^*:=1$. 
 By the inequality $w+w^*\geq (p+1)/2$, it holds that $w+w^*=(p+1)/2$. 
 
 Then it follows from \cite[Theorem~8]{BGKM} that $C$ is either 
 \begin{itemize}
 \item $C=\{\alpha\in \binom{[p+1]}{(p+1)/2} \mid T\subset \alpha\}$ for some $T\subset [p+1]$ such that $|T|=1$, or 
 \item $C=\{\alpha\in \binom{[p+1]}{(p+1)/2} \mid  \alpha\subset T \}$ for some $T\subset [p+1]$ such that $|T|=(p+1)/2-1$. 
 \end{itemize}
 Since the column vectors of $W$ are linearly independent, the solution of the equation \eqref{eq:p3} for each $\chi$ above are 
 \begin{itemize}
 \item ${\bm b}=e_i$, where $e_i$ is the $i$-th standard basis, 
 \item ${\bm b}=\frac{2}{p+1}{\bm 1}-e_i$. 
 \end{itemize}
 Then the corresponding ${\bm a}$'s are  
 \begin{itemize}
 \item ${\bm a}=(p+1)e_i-{\bm 1}$, 
 \item ${\bm a}=(p+1)(\frac{2}{p+1}{\bm 1}-e_i)-{\bm 1}={\bm 1}-(p+1)e_i$. 
 \end{itemize} 
Therefore, 
\begin{align}\label{eq:p4}
\langle r_{x,i}, r_{y,i}\rangle\in\{p,-1\}\text{ for any $i$,} \text{ or } \langle r_{x,i}, r_{y,i}\rangle\in\{-p,1\}\text{ for any $i$}. 
\end{align}
That is,  $H_iH_i^\top$ is a $(p,-p,1,-1)$-matrix for any $i$.  

\medskip
{\bf Claim 2} There exists a diagonal matrix $D$ with diagonal entries in $\{1,-1\}$ such that $DH_1(DH_1)^\top$ is a $(p,-1)$-matrix.

Note that this result yields that $DH_i(DH_i)^\top$ is a $(p,-1)$-matrix for any $i\in\{2,\ldots,p+1\}$ by \eqref{eq:p4}. 
\begin{proof}[Proof of Claim 2]
Define the equivalence relation $\sim_1$ on $[p^2]$ by 
\[
x \sim_1 y \text{ if and only if }r_{x,1}=\pm r_{y,1}. 
\]
Let $C_1,\ldots,C_\ell$ be the equivalence classes with respect to $\sim_1$ on $[p^2]$. 
Then we may negate some of the elements $r_{z,1}$ in $z\in [p^2]$ so that if $x$ and $y$ are in the same equivalence class, then $r_{x,1}=r_{y,1}$.  

Assume that $1\in C_1$ and fix it. 
Now we negate some of the elements $r_{y,1}$ for $y\in [p^2]\setminus C_1$ so that $\langle r_{1,1},r_{y,1} \rangle=-1$. 
Note that the condition that $r_{x,1}=r_{y,1}$ for any $x$ and $y$ being in the same class keeps. 
Then, the matrix $H_1H_1^\top$ forms like: 
\begin{align*}
H_1H_1^\top=\begin{bmatrix}
pJ_{|C_1|} & -J_{|C_1|,|C_2|} & \cdots & -J_{|C_1|,|C_\ell|} \\
-J_{|C_2|,|C_1|} & pJ_{|C_1|} &  \cdots & \pm J_{|C_2|,|C_\ell|} \\
\vdots & \vdots &  \ddots & \vdots \\
-J_{|C_\ell|,|C_1|} & \pm J_{|C_\ell|,|C_2|} & \cdots & pJ_{|C_\ell|} 
\end{bmatrix},
\end{align*}
where $J_k$ is the $k\times k$ all-ones matrix and $J_{k,\ell}$ is the $k\times \ell$ all-ones matrix.  
We then show the $(i,j)$-block for any distinct $i,j\in\{2,\ldots,\ell\}$ is $-J_{|C_i|,|C_j|}$. 
Assume that there exist $r_{x,1}$ and $r_{y,1}$ with the property that $\langle r_{x,1},r_{y,1} \rangle=1$. 
We may assume that $r_{1,1}={\bm 1}^\top$ by negating columns of $H_1$.
Write the entries of $r_{1,1},r_{x,1},r_{y,1}$ as: 
\begin{equation*}
\begin{array}{lccccc}
r_{1,1}=(&+ \cdots + & + \cdots + & + \cdots + & + \cdots + &),\\
r_{x,1}=(&+ \cdots + & + \cdots + & - \cdots - & - \cdots - &),\\
r_{y,1}=(&
\underbrace{+ \cdots +}_{a \text{ columns}} &
\underbrace{- \cdots -}_{b \text{ columns}} &
\underbrace{+ \cdots +}_{c \text{ columns}} &
\underbrace{- \cdots -}_{d \text{ columns}}&).
\end{array}
\end{equation*}
Then by the information of inner products among $r_{1,1},r_{x,1},r_{y,1}$, 
\begin{align*}
a+b+c+d&=p \\
a+b-c-d&=-1 \\
a-b+c-d&=-1 \\
a-b-c+d&=1.
\end{align*}
The solution of the system of linear equations is 
\[
a=b=c=\frac{p-1}{4},\quad d=\frac{p+3}{4}.
\]
This contradicts the fact that $p\equiv 3\pmod{4}$. 
Therefore $\langle r_{x,i},r_{y,i}\rangle=-1$, and thus  
\begin{align*}
H_1H_1^\top=\begin{bmatrix}
pJ_{|C_1|} & -J_{|C_1|,|C_2|} & \cdots & -J_{|C_1|,|C_\ell|} \\
-J_{|C_2|,|C_1|} & pJ_{|C_1|} &  \cdots &  -J_{|C_2|,|C_\ell|} \\
\vdots & \vdots &  \ddots & \vdots \\
-J_{|C_\ell|,|C_1|} & -J_{|C_\ell|,|C_2|} & \cdots & pJ_{|C_\ell|} 
\end{bmatrix}. 
\end{align*}
\end{proof}
The condition \eqref{eq:p4} is now 
\begin{align}\label{eq:p5}
\langle r_{x,i}, r_{y,i}\rangle\in\{p,-1\}\text{ for any $i$}. 
\end{align}

Next, we show the following:

\medskip
{\bf Claim 3} For any $i$, by suitably permuting the rows of $H$,  
\begin{align*}
H_iH_i^\top=\begin{bmatrix}
pJ_{p} & -J_{p,p} & \cdots & -J_{p,p} \\
-J_{p,p} & pJ_{p} &  \cdots &  -J_{p,p} \\
\vdots & \vdots &  \ddots & \vdots \\
-J_{p,p} & -J_{p,p} & \cdots & pJ_{p} 
\end{bmatrix}. 
 \end{align*}  

\begin{proof}[Proof of Claim 3]
We show it for $i=1$. 
By \eqref{eq:p4}, 
\begin{align*}
H_k H_k^\top=\begin{bmatrix}
(p+1)I_{|C_1|}-J_{|C_1|} & * & \cdots & * \\
* & (p+1)I_{|C_2|}-J_{|C_2|} &  \cdots &  * \\
\vdots & \vdots &  \ddots & \vdots \\
* & * & \cdots & (p+1)I_{|C_\ell|}-J_{|C_\ell|} 
\end{bmatrix},  
\end{align*}
for $k\in\{2,\ldots,p+1\}$. 

Let $B_{i,j}^{k}$ be the $(i,j)$-block of $H_kH_k^\top$. 
Let $i,j$ be distinct integers in $\{1,\ldots,\ell\}$. 
For distinct $x,y\in C_i$ and $z\in C_j$, consider the $3\times 3$ principal submatrix, say $M$, of $H_kH_k^\top$ obtained by restricting the rows and the columns to $\{x,y,z\}$. 
Since $M$ is positive semidefinite, the matrix $M$ cannot be
\[
\bbordermatrix{~ & x & y & z \cr
              x & p & -1 & p \cr
              y & -1 & p & p \cr
              z & p & p & p \cr}.
\]
Thus, each row and each column of $B_{i,j}^k$ has at most one entry of $p$.  
Since $\sum_{i=1}^{p+1}H_iH_i^\top=p(p+1)I$, each $C_i$ must have the same size $p$ and $\ell$ must be $p$.  
\end{proof}

We are ready to prove the theorem. 
For each $i$, consider the set of the distinct $p$ rows of $H_i$ and fix the ordering of the row vectors, say $\tilde{r}_{1,i},\ldots,\tilde{r}_{p,i}$ 
Consider the matrix 
\[
K_i=\begin{bmatrix}
1 & \tilde{r}_{1,i} \\ 
1 & \tilde{r}_{2,i} \\ 
\vdots & \vdots \\ 
1 & \tilde{r}_{p,i} 
\end{bmatrix}. 
\]
Then  each $K_i$ is a $p\times (p+1)$  partial Hadamard matrix.  
Replace $\tilde{r}_{j,i}$ in $H$ with a symbol $j$, and we obtain $p^2\times (p+1)$ matrix, say $C$. 
Then, the code consisting of the row vectors of $C$ is a  $p$-ary code of length $p+1$ with the number of codewords $p^2$, equidistance $p$, of length $p+1$.   
Indeed, it follows from \eqref{eq:p5} that the Hamming distance between any distinct codewords is $p$.  
Therefore, there exists an orthogonal array $\text{OA}_1(p^2,p+1,p,2)$. 
This completes the proof.  
\end{proof}




\subsection{Proof of Theorem~\ref{thm:main2}}
\begin{proof}[Proof of Theorem~\ref{thm:main2} (i)]
Assume that $H$ is a balancedly multi-splittable $p^2\times p(p+1)$ partial Hadamard matrix with respect to the following block form: 
\[
H=\begin{bmatrix} H_1 & \cdots & H_{p+1}  
\end{bmatrix},
\] 
where each $H_i$ is a $p^2\times p$ matrix.  
With reference to proof of Theorem~\ref{thm:main}, 
consider the following $p\times p$ matrix
\[
L_i=\begin{bmatrix}
 \tilde{r}_{1,i} \\ 
 \tilde{r}_{2,i} \\ 
 \vdots \\ 
 \tilde{r}_{p,i} 
\end{bmatrix}.  
\]
Then $L_iL_i^\top=(p+1)I_p-J_p$ holds. 
Consider the sum of the row vectors of $L_i$, say ${\bm x}_i=\sum_{j=1}^p \tilde{r}_{j,i}$. 
Since $p$ is an odd integer, the entries of ${\bm x}_i$ are odd integers. 
Moreover, 
\begin{align*}
\langle {\bm x}_i, {\bm x}_i\rangle &=\sum_{j,k=1}^p \langle \tilde{r}_{j,i},\tilde{r}_{k,i}\rangle \\
&=\sum_{j=1}^p \langle \tilde{r}_{j,i},\tilde{r}_{j,i}\rangle+\sum_{j\neq k} \langle \tilde{r}_{j,i},\tilde{r}_{k,i}\rangle \\
&=\sum_{j=1}^p p+\sum_{j\neq k} (-1) \\
&=p^2+(-1)p(p-1)\\
&=p.
\end{align*}
Therefore, ${\bm x}_i$ must be a $(1,-1)$-vector. 
Then the matrix 
\[
\tilde{L}_i=\begin{bmatrix}
1 & {\bm x}_i \\
-1 & \tilde{r}_{1,i} \\ 
-1 &  \tilde{r}_{2,i} \\ 
\vdots  &  \vdots \\ 
-1 &  \tilde{r}_{p,i} 
\end{bmatrix}  
\]
is easily shown to be a Hadamard matrix of order $p+1$. 
Define the diagonal matrix.
\[
D_i=\text{diag}(x_{1,i},\ldots,x_{p,i}), 
\]
where ${\bm x}_i=(x_{1,i},\ldots,x_{p,i})$. 
Then, the matrix $H_iD_i$ has the constant row sum $1$ and the constant column sum $p$. 
Therefore, the matrix $H'$ defined by 
\[
H'=\begin{bmatrix} H_1D_1 & \cdots & H_{p+1}D_{p+1}  
\end{bmatrix}
\]
is the desired matrix. 
This completes the proof.  
\end{proof}

\begin{proof}[Proof of Theorem~\ref{thm:main2} (ii)]
In proof of Theorem~\ref{thm:main2} (i), it was shown that there exist  Hadamard matrices $\tilde{L}_i$ of order $p+1$ for each $i\in\{1,\ldots,p+1\}$. 
Fix one of them and normalize so that the first row consists of all ones, say $L$ with rows $r_1=(1,\ldots,1),r_2,\ldots,r_{p+1}$.
Define $M_i=r_i^\top {\bm 1}$ for any $i\in\{1,\ldots,p+1\}$.  
Then, it is easy to see that the matrix. 
\[
\begin{bmatrix} H_1D_1 & \cdots & H_{p+1}D_{p+1}  \\
M_1 & \cdots & M_{p+1}  
\end{bmatrix}
\]
is a Hadamard matrix of order $p(p+1)$. 
\end{proof}

\section{Balancedly multi-splittable Hadamard matrices for $q\equiv1\pmod{4}$}
Let $q$ be a positive integer such that $q\equiv 1\pmod{4}$.   
Let $H$ be a $q^2\times(q+1)$ partial quaternary Hadamard matrix. 
A $q^2\times(q+1)$ partial Hadamard matrix $H$ is said to be \defn{balancedly multi-splittable} if there is a block form of $H=\begin{bmatrix}
H_1 & \cdots & H_{p+1}  
\end{bmatrix}$ 
such that each $H_i$ is a $q^2\times q$ matrix and $H$ is balancedly spllitable with respect to a submatrix $H'=\begin{bmatrix}
 H_{i_1} & \cdots & H_{i_{(q+1)/2}}  
\end{bmatrix}$ for any $(q+1)/2$-elements subset $\{i_1,\ldots,i_{(q+1)/2}\}$ of $\{1,2,\ldots,q+1\}$, that is 
\[
(H'(H')^*)_{xy}=\begin{cases}
\frac{q+1}{2} & \text{ if }x=y,\\
\pm q & \text{ if }x\neq y. 
\end{cases}
\]

In the same manner as the implication from (ii) to (i) in Theorem~\ref{thm:main}, the following is proved.  
\begin{theorem}\label{thm:mainq}
Let $q$ be a positive integer such that $q\equiv1\pmod{4}$. In the following, (ii) implies (i). 
\begin{enumerate}
\item There exists a balancedly multi-splittable $q^2\times q(q+1)$ partial quaternary Hadamard matrix. 
\item There exist an OA$_1(q^2,q+1,q,2)$ and a $q\times(q+1)$ partial quaternary Hadamard matrix.  
\end{enumerate}
\end{theorem}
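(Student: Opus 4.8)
The plan is to transplant the construction behind the implication (ii)$\Rightarrow$(i) of Theorem~\ref{thm:main} (the one borrowed from \cite{KS}) to the quaternary setting; essentially nothing changes except that real $\pm1$ entries become fourth roots of unity and transposes become conjugate transposes. The key combinatorial input is that an $\mathrm{OA}_1(q^2,q+1,q,2)$ is the same datum as an affine plane of order $q$: identify its $q^2$ runs with the points, its $q+1$ factors with the parallel classes of lines, and the entry $A(x,i)\in[q]$ with the line of the $i$-th class through the point $x$. From strength $2$ and index $1$ one gets, by the same counting as in the derivation of Claim~3 of Theorem~\ref{thm:main}, that any two distinct runs agree in exactly one of the $q+1$ coordinates.

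I would then fix the quaternary ingredient. Let $K$ be the given $q\times(q+1)$ partial quaternary Hadamard matrix. Scaling each row by an appropriate fourth root of unity, assume the first column of $K$ is ${\bm 1}$, and let $L$ be the $q\times q$ matrix obtained by deleting that column; then $KK^{*}=(q+1)I_q$ forces $LL^{*}=(q+1)I_q-J_q$. Writing $\ell_1,\dots,\ell_q$ for the rows of $L$, we have $\langle \ell_s,\ell_s\rangle=q$ and $\langle \ell_s,\ell_t\rangle=-1$ for $s\neq t$, the inner product being Hermitian. Now, for each $i\in\{1,\dots,q+1\}$ define the $q^2\times q$ matrix $H_i$ whose row indexed by the point $x$ is $\ell_{A(x,i)}$, and set $H=\begin{bmatrix}H_1&\cdots&H_{q+1}\end{bmatrix}$, a $q^2\times q(q+1)$ quaternary matrix.

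The verification is a direct inner-product count. For $x=y$, $(HH^{*})_{xx}=\sum_{i=1}^{q+1}\langle\ell_{A(x,i)},\ell_{A(x,i)}\rangle=q(q+1)$. For $x\neq y$, let $i^{*}$ be the unique coordinate in which the runs $x$ and $y$ agree; then $(HH^{*})_{xy}=\langle\ell_{A(x,i^{*})},\ell_{A(y,i^{*})}\rangle+\sum_{i\neq i^{*}}\langle\ell_{A(x,i)},\ell_{A(y,i)}\rangle=q+q\cdot(-1)=0$, so $H$ is a partial quaternary Hadamard matrix. For any $I\in\binom{[q+1]}{(q+1)/2}$ put $H'=\begin{bmatrix}H_i\end{bmatrix}_{i\in I}$; then $(H'(H')^{*})_{xx}=\tfrac{q(q+1)}{2}$ is constant, and for $x\neq y$ one gets $(H'(H')^{*})_{xy}=q-\tfrac{q-1}{2}=\tfrac{q+1}{2}$ when $i^{*}\in I$ and $-\tfrac{q+1}{2}$ when $i^{*}\notin I$. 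Thus every such $H'$ has all off-diagonal entries of the single absolute value $\tfrac{q+1}{2}$, i.e. $H$ is balancedly multi-splittable, which is (i).

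I do not anticipate a genuine obstacle: the point of this proof is precisely that it is "the same" as the $p\equiv3\pmod4$ case, and the hypothesis $q\equiv1\pmod4$ is used only to ensure that the ingredients in (ii) can be quaternary (e.g.\ $I+iC$ for a symmetric conference matrix $C$ of order $q+1$), not in the construction itself, which needs only that $q$ is odd. The two things to be careful about are (a) using the affine-plane description of the array, rather than an ad hoc pair of coordinate columns, so that the "agree in exactly one coordinate" property is available uniformly over all $q+1$ factors, and (b) the row-normalization of $K$ that turns the deleted column into ${\bm 1}$, which is what makes $LL^{*}=(q+1)I_q-J_q$ rather than $(q+1)I_q-vv^{*}$ for the deleted unit column $v$; with $vv^{*}$ present the off-diagonal inner products would be $-\bar v_s v_t$ instead of $-1$ and the count would fail.
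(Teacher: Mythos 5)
Your construction is correct and is essentially the paper's own proof: the paper settles Theorem~\ref{thm:mainq} by saying it is proved ``in the same manner as (ii)$\Rightarrow$(i) of Theorem~\ref{thm:main}'', which is exactly the \cite{KS}-style construction you have written out --- row-normalize the $q\times(q+1)$ quaternary matrix so that deleting a column leaves a core $L$ with $LL^{*}=(q+1)I_q-J_q$, then substitute the rows $\ell_{A(x,i)}$ according to the orthogonal array, using that two runs of an $\mathrm{OA}_1(q^2,q+1,q,2)$ agree in exactly one coordinate. The only point of friction is with the paper's displayed quaternary splitting condition ($\tfrac{q+1}{2}$ on the diagonal of $H'(H')^{*}$ and $\pm q$ off it), which is evidently a misprint since a $q^2\times\tfrac{q(q+1)}{2}$ unimodular matrix forces diagonal $\tfrac{q(q+1)}{2}$; the values you obtain, $\tfrac{q(q+1)}{2}$ on and $\pm\tfrac{q+1}{2}$ off the diagonal, are the ones consistent with Section~2, so this is a defect of the paper's statement rather than of your argument.
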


\medskip
\noindent{\bf Question~1.} 
Let $q$ be a positive integer such that $q\equiv1\pmod{4}$. In Theorem~\ref{thm:mainq}, does (i) imply (ii)?

\section{Balancedly splittable BIBDs}
For the prime power $p\equiv3\pmod{4}$, referring to Theorem \ref{thm:main2}, it is assumed that any balancedly multi-splittable partial Hadamard matrix $H=\begin{bmatrix}
H_1 & \cdots & H_{p+1}  
\end{bmatrix}$  of order $p^2\times p(p+1)$ is both row and column regular, and the row sum of each of $H_i$ is one, and the column sum is $p$.  Furthermore, let $R_i=\begin{bmatrix}r_1, & \ldots & r_{p+1}\end{bmatrix}$ and $R_j=\begin{bmatrix}s_1, & \ldots & s_{p+1}\end{bmatrix}$, $i\ne j$ be any two rows of $H$, where $r_i$ and $s_i$ denote the block rows corresponding to $H_i$. It follows from Theorem \ref{thm:main} that exactly one of the $r_i$'s and $s_i$'s occur in the same column.
Let $D=1/2(J_{p^2\times p(p+1)} - H)$,  then it follows (as shown below) from Lemma \ref{equal2} that the $(0,1)$-matrix $D$ is the incidence matrix of a BIBD$(p^2,p^2+p,1/2(p^2-1),1/2(p^2-p),1/4(p^2-p-2)$. Furthermore, $D$ is partitioned into $p+1$ sets of blocks $D_i$, $i=1,2,\cdots,p+1$,  each consisting of $p$ columns in such a way that for any selection of $D_i$, $i= 1, 2, \cdots,1/2(p+1)$ each pair of distinct points occurs in either $1/8(p^2-1)$ or  $1/8(p^2-2p-3)$ blocks. This motivates the following definition. The main reference for terms not defined here is \cite{stinson}.

\begin{definition}
Let $p\equiv3\pmod{4}$.   
Let $D$ be the incidence matrix of a BIBD, say $\mathcal {D}$ with parameters $$\left(p^2,p^2+p,1/2(p^2-1),1/2(p^2-p),1/4(p^2-p-2)\right).$$
$\mathcal{D}$ is said to be \defn{balancedly multi-splittable} if there is a block form of $D=\begin{bmatrix}
D_1 & \cdots & D_{p+1}  
\end{bmatrix}$ 
such that each $D_i$ is a $p^2\times p$ matrix with row sum $\frac{p-1}{2}$, and for any submatrix $\begin{bmatrix}
 D_{i_1} & \cdots & D_{i_{(p+1)/2}}  \end{bmatrix}$, for any $(p+1)/2$-elements subset $\{i_1,\ldots,i_{(p+1)/2}\}$ of $\{1,2,\ldots,p+1\}$,
each pair of distinct points occurs in either $\frac{p^2-1}{8}$ or  $\frac{p^2-2p-3}{8}$ blocks.


\end{definition}

\begin{example}
A balancedly multi-splittable BIBD$(49,56,24,21,10)$. 
\[\left[ \arraycolsep=1pt \def\arraystretch{1} 
\footnotesize  \begin{array}{ccccccc|ccccccc|ccccccc|ccccccc|ccccccc|ccccccc|ccccccc|ccccccc}
0 & 0 & 0 & 1 & 0 & 1 & 1 & 0 & 0 & 0 & 1 & 0 & 1 & 1 & 0 & 0 & 0 & 1 & 0 & 1 & 1 & 0 & 0 & 0 & 1 & 0 & 1 & 1 & 0 & 0 & 0 & 1 & 0 & 1 & 1 & 0 & 0 & 0 & 1 & 0 & 1 & 1 & 0 & 0 & 0 & 1 & 0 & 1 & 1 & 0 & 0 & 0 & 1 & 0 & 1 & 1 
\\
 0 & 0 & 0 & 1 & 0 & 1 & 1 & 1 & 0 & 0 & 0 & 1 & 0 & 1 & 1 & 0 & 0 & 0 & 1 & 0 & 1 & 1 & 0 & 0 & 0 & 1 & 0 & 1 & 1 & 0 & 0 & 0 & 1 & 0 & 1 & 1 & 0 & 0 & 0 & 1 & 0 & 1 & 1 & 0 & 0 & 0 & 1 & 0 & 1 & 1 & 0 & 0 & 0 & 1 & 0 & 1 
\\
 0 & 0 & 0 & 1 & 0 & 1 & 1 & 1 & 1 & 0 & 0 & 0 & 1 & 0 & 1 & 1 & 0 & 0 & 0 & 1 & 0 & 1 & 1 & 0 & 0 & 0 & 1 & 0 & 1 & 1 & 0 & 0 & 0 & 1 & 0 & 1 & 1 & 0 & 0 & 0 & 1 & 0 & 1 & 1 & 0 & 0 & 0 & 1 & 0 & 1 & 1 & 0 & 0 & 0 & 1 & 0 
\\
 0 & 0 & 0 & 1 & 0 & 1 & 1 & 0 & 1 & 1 & 0 & 0 & 0 & 1 & 0 & 1 & 1 & 0 & 0 & 0 & 1 & 0 & 1 & 1 & 0 & 0 & 0 & 1 & 0 & 1 & 1 & 0 & 0 & 0 & 1 & 0 & 1 & 1 & 0 & 0 & 0 & 1 & 0 & 1 & 1 & 0 & 0 & 0 & 1 & 0 & 1 & 1 & 0 & 0 & 0 & 1 
\\
 0 & 0 & 0 & 1 & 0 & 1 & 1 & 1 & 0 & 1 & 1 & 0 & 0 & 0 & 1 & 0 & 1 & 1 & 0 & 0 & 0 & 1 & 0 & 1 & 1 & 0 & 0 & 0 & 1 & 0 & 1 & 1 & 0 & 0 & 0 & 1 & 0 & 1 & 1 & 0 & 0 & 0 & 1 & 0 & 1 & 1 & 0 & 0 & 0 & 1 & 0 & 1 & 1 & 0 & 0 & 0 
\\
 0 & 0 & 0 & 1 & 0 & 1 & 1 & 0 & 1 & 0 & 1 & 1 & 0 & 0 & 0 & 1 & 0 & 1 & 1 & 0 & 0 & 0 & 1 & 0 & 1 & 1 & 0 & 0 & 0 & 1 & 0 & 1 & 1 & 0 & 0 & 0 & 1 & 0 & 1 & 1 & 0 & 0 & 0 & 1 & 0 & 1 & 1 & 0 & 0 & 0 & 1 & 0 & 1 & 1 & 0 & 0 
\\
 0 & 0 & 0 & 1 & 0 & 1 & 1 & 0 & 0 & 1 & 0 & 1 & 1 & 0 & 0 & 0 & 1 & 0 & 1 & 1 & 0 & 0 & 0 & 1 & 0 & 1 & 1 & 0 & 0 & 0 & 1 & 0 & 1 & 1 & 0 & 0 & 0 & 1 & 0 & 1 & 1 & 0 & 0 & 0 & 1 & 0 & 1 & 1 & 0 & 0 & 0 & 1 & 0 & 1 & 1 & 0 
\\
 1 & 0 & 0 & 0 & 1 & 0 & 1 & 0 & 0 & 0 & 1 & 0 & 1 & 1 & 1 & 0 & 0 & 0 & 1 & 0 & 1 & 1 & 1 & 0 & 0 & 0 & 1 & 0 & 0 & 1 & 1 & 0 & 0 & 0 & 1 & 1 & 0 & 1 & 1 & 0 & 0 & 0 & 0 & 1 & 0 & 1 & 1 & 0 & 0 & 0 & 0 & 1 & 0 & 1 & 1 & 0 
\\
 1 & 0 & 0 & 0 & 1 & 0 & 1 & 1 & 0 & 0 & 0 & 1 & 0 & 1 & 1 & 1 & 0 & 0 & 0 & 1 & 0 & 0 & 1 & 1 & 0 & 0 & 0 & 1 & 1 & 0 & 1 & 1 & 0 & 0 & 0 & 0 & 1 & 0 & 1 & 1 & 0 & 0 & 0 & 0 & 1 & 0 & 1 & 1 & 0 & 0 & 0 & 0 & 1 & 0 & 1 & 1 
\\
 1 & 0 & 0 & 0 & 1 & 0 & 1 & 1 & 1 & 0 & 0 & 0 & 1 & 0 & 0 & 1 & 1 & 0 & 0 & 0 & 1 & 1 & 0 & 1 & 1 & 0 & 0 & 0 & 0 & 1 & 0 & 1 & 1 & 0 & 0 & 0 & 0 & 1 & 0 & 1 & 1 & 0 & 0 & 0 & 0 & 1 & 0 & 1 & 1 & 1 & 0 & 0 & 0 & 1 & 0 & 1 
\\
 1 & 0 & 0 & 0 & 1 & 0 & 1 & 0 & 1 & 1 & 0 & 0 & 0 & 1 & 1 & 0 & 1 & 1 & 0 & 0 & 0 & 0 & 1 & 0 & 1 & 1 & 0 & 0 & 0 & 0 & 1 & 0 & 1 & 1 & 0 & 0 & 0 & 0 & 1 & 0 & 1 & 1 & 1 & 0 & 0 & 0 & 1 & 0 & 1 & 1 & 1 & 0 & 0 & 0 & 1 & 0 
\\
 1 & 0 & 0 & 0 & 1 & 0 & 1 & 1 & 0 & 1 & 1 & 0 & 0 & 0 & 0 & 1 & 0 & 1 & 1 & 0 & 0 & 0 & 0 & 1 & 0 & 1 & 1 & 0 & 0 & 0 & 0 & 1 & 0 & 1 & 1 & 1 & 0 & 0 & 0 & 1 & 0 & 1 & 1 & 1 & 0 & 0 & 0 & 1 & 0 & 0 & 1 & 1 & 0 & 0 & 0 & 1 
\\
 1 & 0 & 0 & 0 & 1 & 0 & 1 & 0 & 1 & 0 & 1 & 1 & 0 & 0 & 0 & 0 & 1 & 0 & 1 & 1 & 0 & 0 & 0 & 0 & 1 & 0 & 1 & 1 & 1 & 0 & 0 & 0 & 1 & 0 & 1 & 1 & 1 & 0 & 0 & 0 & 1 & 0 & 0 & 1 & 1 & 0 & 0 & 0 & 1 & 1 & 0 & 1 & 1 & 0 & 0 & 0 
\\
 1 & 0 & 0 & 0 & 1 & 0 & 1 & 0 & 0 & 1 & 0 & 1 & 1 & 0 & 0 & 0 & 0 & 1 & 0 & 1 & 1 & 1 & 0 & 0 & 0 & 1 & 0 & 1 & 1 & 1 & 0 & 0 & 0 & 1 & 0 & 0 & 1 & 1 & 0 & 0 & 0 & 1 & 1 & 0 & 1 & 1 & 0 & 0 & 0 & 0 & 1 & 0 & 1 & 1 & 0 & 0 
\\
 1 & 1 & 0 & 0 & 0 & 1 & 0 & 0 & 0 & 0 & 1 & 0 & 1 & 1 & 1 & 1 & 0 & 0 & 0 & 1 & 0 & 1 & 0 & 1 & 1 & 0 & 0 & 0 & 0 & 0 & 1 & 0 & 1 & 1 & 0 & 1 & 0 & 0 & 0 & 1 & 0 & 1 & 0 & 1 & 1 & 0 & 0 & 0 & 1 & 0 & 1 & 0 & 1 & 1 & 0 & 0 
\\
 1 & 1 & 0 & 0 & 0 & 1 & 0 & 1 & 1 & 0 & 0 & 0 & 1 & 0 & 1 & 0 & 1 & 1 & 0 & 0 & 0 & 0 & 0 & 1 & 0 & 1 & 1 & 0 & 1 & 0 & 0 & 0 & 1 & 0 & 1 & 0 & 1 & 1 & 0 & 0 & 0 & 1 & 0 & 1 & 0 & 1 & 1 & 0 & 0 & 0 & 0 & 0 & 1 & 0 & 1 & 1 
\\
 1 & 1 & 0 & 0 & 0 & 1 & 0 & 1 & 0 & 1 & 1 & 0 & 0 & 0 & 0 & 0 & 1 & 0 & 1 & 1 & 0 & 1 & 0 & 0 & 0 & 1 & 0 & 1 & 0 & 1 & 1 & 0 & 0 & 0 & 1 & 0 & 1 & 0 & 1 & 1 & 0 & 0 & 0 & 0 & 0 & 1 & 0 & 1 & 1 & 1 & 1 & 0 & 0 & 0 & 1 & 0 
\\
 1 & 1 & 0 & 0 & 0 & 1 & 0 & 0 & 0 & 1 & 0 & 1 & 1 & 0 & 1 & 0 & 0 & 0 & 1 & 0 & 1 & 0 & 1 & 1 & 0 & 0 & 0 & 1 & 0 & 1 & 0 & 1 & 1 & 0 & 0 & 0 & 0 & 0 & 1 & 0 & 1 & 1 & 1 & 1 & 0 & 0 & 0 & 1 & 0 & 1 & 0 & 1 & 1 & 0 & 0 & 0 
\\
 1 & 1 & 0 & 0 & 0 & 1 & 0 & 1 & 0 & 0 & 0 & 1 & 0 & 1 & 0 & 1 & 1 & 0 & 0 & 0 & 1 & 0 & 1 & 0 & 1 & 1 & 0 & 0 & 0 & 0 & 0 & 1 & 0 & 1 & 1 & 1 & 1 & 0 & 0 & 0 & 1 & 0 & 1 & 0 & 1 & 1 & 0 & 0 & 0 & 0 & 0 & 1 & 0 & 1 & 1 & 0 
\\
 1 & 1 & 0 & 0 & 0 & 1 & 0 & 0 & 1 & 1 & 0 & 0 & 0 & 1 & 0 & 1 & 0 & 1 & 1 & 0 & 0 & 0 & 0 & 0 & 1 & 0 & 1 & 1 & 1 & 1 & 0 & 0 & 0 & 1 & 0 & 1 & 0 & 1 & 1 & 0 & 0 & 0 & 0 & 0 & 1 & 0 & 1 & 1 & 0 & 1 & 0 & 0 & 0 & 1 & 0 & 1 
\\
 1 & 1 & 0 & 0 & 0 & 1 & 0 & 0 & 1 & 0 & 1 & 1 & 0 & 0 & 0 & 0 & 0 & 1 & 0 & 1 & 1 & 1 & 1 & 0 & 0 & 0 & 1 & 0 & 1 & 0 & 1 & 1 & 0 & 0 & 0 & 0 & 0 & 1 & 0 & 1 & 1 & 0 & 1 & 0 & 0 & 0 & 1 & 0 & 1 & 0 & 1 & 1 & 0 & 0 & 0 & 1 
\\
 0 & 1 & 1 & 0 & 0 & 0 & 1 & 0 & 0 & 0 & 1 & 0 & 1 & 1 & 0 & 1 & 1 & 0 & 0 & 0 & 1 & 0 & 0 & 1 & 0 & 1 & 1 & 0 & 1 & 1 & 0 & 0 & 0 & 1 & 0 & 0 & 1 & 0 & 1 & 1 & 0 & 0 & 1 & 0 & 0 & 0 & 1 & 0 & 1 & 1 & 0 & 1 & 1 & 0 & 0 & 0 
\\
 0 & 1 & 1 & 0 & 0 & 0 & 1 & 0 & 1 & 1 & 0 & 0 & 0 & 1 & 0 & 0 & 1 & 0 & 1 & 1 & 0 & 1 & 1 & 0 & 0 & 0 & 1 & 0 & 0 & 1 & 0 & 1 & 1 & 0 & 0 & 1 & 0 & 0 & 0 & 1 & 0 & 1 & 1 & 0 & 1 & 1 & 0 & 0 & 0 & 0 & 0 & 0 & 1 & 0 & 1 & 1 
\\
 0 & 1 & 1 & 0 & 0 & 0 & 1 & 0 & 0 & 1 & 0 & 1 & 1 & 0 & 1 & 1 & 0 & 0 & 0 & 1 & 0 & 0 & 1 & 0 & 1 & 1 & 0 & 0 & 1 & 0 & 0 & 0 & 1 & 0 & 1 & 1 & 0 & 1 & 1 & 0 & 0 & 0 & 0 & 0 & 0 & 1 & 0 & 1 & 1 & 0 & 1 & 1 & 0 & 0 & 0 & 1 
\\
 0 & 1 & 1 & 0 & 0 & 0 & 1 & 1 & 1 & 0 & 0 & 0 & 1 & 0 & 0 & 1 & 0 & 1 & 1 & 0 & 0 & 1 & 0 & 0 & 0 & 1 & 0 & 1 & 1 & 0 & 1 & 1 & 0 & 0 & 0 & 0 & 0 & 0 & 1 & 0 & 1 & 1 & 0 & 1 & 1 & 0 & 0 & 0 & 1 & 0 & 0 & 1 & 0 & 1 & 1 & 0 
\\
 0 & 1 & 1 & 0 & 0 & 0 & 1 & 0 & 1 & 0 & 1 & 1 & 0 & 0 & 1 & 0 & 0 & 0 & 1 & 0 & 1 & 1 & 0 & 1 & 1 & 0 & 0 & 0 & 0 & 0 & 0 & 1 & 0 & 1 & 1 & 0 & 1 & 1 & 0 & 0 & 0 & 1 & 0 & 0 & 1 & 0 & 1 & 1 & 0 & 1 & 1 & 0 & 0 & 0 & 1 & 0 
\\
 0 & 1 & 1 & 0 & 0 & 0 & 1 & 1 & 0 & 0 & 0 & 1 & 0 & 1 & 1 & 0 & 1 & 1 & 0 & 0 & 0 & 0 & 0 & 0 & 1 & 0 & 1 & 1 & 0 & 1 & 1 & 0 & 0 & 0 & 1 & 0 & 0 & 1 & 0 & 1 & 1 & 0 & 1 & 1 & 0 & 0 & 0 & 1 & 0 & 0 & 1 & 0 & 1 & 1 & 0 & 0 
\\
 0 & 1 & 1 & 0 & 0 & 0 & 1 & 1 & 0 & 1 & 1 & 0 & 0 & 0 & 0 & 0 & 0 & 1 & 0 & 1 & 1 & 0 & 1 & 1 & 0 & 0 & 0 & 1 & 0 & 0 & 1 & 0 & 1 & 1 & 0 & 1 & 1 & 0 & 0 & 0 & 1 & 0 & 0 & 1 & 0 & 1 & 1 & 0 & 0 & 1 & 0 & 0 & 0 & 1 & 0 & 1 
\\
 1 & 0 & 1 & 1 & 0 & 0 & 0 & 0 & 0 & 0 & 1 & 0 & 1 & 1 & 1 & 0 & 1 & 1 & 0 & 0 & 0 & 1 & 0 & 0 & 0 & 1 & 0 & 1 & 0 & 1 & 0 & 1 & 1 & 0 & 0 & 1 & 1 & 0 & 0 & 0 & 1 & 0 & 0 & 0 & 1 & 0 & 1 & 1 & 0 & 0 & 1 & 1 & 0 & 0 & 0 & 1 
\\
 1 & 0 & 1 & 1 & 0 & 0 & 0 & 1 & 0 & 1 & 1 & 0 & 0 & 0 & 1 & 0 & 0 & 0 & 1 & 0 & 1 & 0 & 1 & 0 & 1 & 1 & 0 & 0 & 1 & 1 & 0 & 0 & 0 & 1 & 0 & 0 & 0 & 1 & 0 & 1 & 1 & 0 & 0 & 1 & 1 & 0 & 0 & 0 & 1 & 0 & 0 & 0 & 1 & 0 & 1 & 1 
\\
 1 & 0 & 1 & 1 & 0 & 0 & 0 & 1 & 0 & 0 & 0 & 1 & 0 & 1 & 0 & 1 & 0 & 1 & 1 & 0 & 0 & 1 & 1 & 0 & 0 & 0 & 1 & 0 & 0 & 0 & 1 & 0 & 1 & 1 & 0 & 0 & 1 & 1 & 0 & 0 & 0 & 1 & 0 & 0 & 0 & 1 & 0 & 1 & 1 & 1 & 0 & 1 & 1 & 0 & 0 & 0 
\\
 1 & 0 & 1 & 1 & 0 & 0 & 0 & 0 & 1 & 0 & 1 & 1 & 0 & 0 & 1 & 1 & 0 & 0 & 0 & 1 & 0 & 0 & 0 & 1 & 0 & 1 & 1 & 0 & 0 & 1 & 1 & 0 & 0 & 0 & 1 & 0 & 0 & 0 & 1 & 0 & 1 & 1 & 1 & 0 & 1 & 1 & 0 & 0 & 0 & 1 & 0 & 0 & 0 & 1 & 0 & 1 
\\
 1 & 0 & 1 & 1 & 0 & 0 & 0 & 1 & 1 & 0 & 0 & 0 & 1 & 0 & 0 & 0 & 1 & 0 & 1 & 1 & 0 & 0 & 1 & 1 & 0 & 0 & 0 & 1 & 0 & 0 & 0 & 1 & 0 & 1 & 1 & 1 & 0 & 1 & 1 & 0 & 0 & 0 & 1 & 0 & 0 & 0 & 1 & 0 & 1 & 0 & 1 & 0 & 1 & 1 & 0 & 0 
\\
 1 & 0 & 1 & 1 & 0 & 0 & 0 & 0 & 0 & 1 & 0 & 1 & 1 & 0 & 0 & 1 & 1 & 0 & 0 & 0 & 1 & 0 & 0 & 0 & 1 & 0 & 1 & 1 & 1 & 0 & 1 & 1 & 0 & 0 & 0 & 1 & 0 & 0 & 0 & 1 & 0 & 1 & 0 & 1 & 0 & 1 & 1 & 0 & 0 & 1 & 1 & 0 & 0 & 0 & 1 & 0 
\\
 1 & 0 & 1 & 1 & 0 & 0 & 0 & 0 & 1 & 1 & 0 & 0 & 0 & 1 & 0 & 0 & 0 & 1 & 0 & 1 & 1 & 1 & 0 & 1 & 1 & 0 & 0 & 0 & 1 & 0 & 0 & 0 & 1 & 0 & 1 & 0 & 1 & 0 & 1 & 1 & 0 & 0 & 1 & 1 & 0 & 0 & 0 & 1 & 0 & 0 & 0 & 1 & 0 & 1 & 1 & 0 
\\
 0 & 1 & 0 & 1 & 1 & 0 & 0 & 0 & 0 & 0 & 1 & 0 & 1 & 1 & 0 & 1 & 0 & 1 & 1 & 0 & 0 & 0 & 1 & 1 & 0 & 0 & 0 & 1 & 1 & 0 & 0 & 0 & 1 & 0 & 1 & 0 & 0 & 1 & 0 & 1 & 1 & 0 & 1 & 0 & 1 & 1 & 0 & 0 & 0 & 1 & 1 & 0 & 0 & 0 & 1 & 0 
\\
 0 & 1 & 0 & 1 & 1 & 0 & 0 & 0 & 1 & 0 & 1 & 1 & 0 & 0 & 0 & 1 & 1 & 0 & 0 & 0 & 1 & 1 & 0 & 0 & 0 & 1 & 0 & 1 & 0 & 0 & 1 & 0 & 1 & 1 & 0 & 1 & 0 & 1 & 1 & 0 & 0 & 0 & 1 & 1 & 0 & 0 & 0 & 1 & 0 & 0 & 0 & 0 & 1 & 0 & 1 & 1 
\\
 0 & 1 & 0 & 1 & 1 & 0 & 0 & 0 & 1 & 1 & 0 & 0 & 0 & 1 & 1 & 0 & 0 & 0 & 1 & 0 & 1 & 0 & 0 & 1 & 0 & 1 & 1 & 0 & 1 & 0 & 1 & 1 & 0 & 0 & 0 & 1 & 1 & 0 & 0 & 0 & 1 & 0 & 0 & 0 & 0 & 1 & 0 & 1 & 1 & 0 & 1 & 0 & 1 & 1 & 0 & 0 
\\
 0 & 1 & 0 & 1 & 1 & 0 & 0 & 1 & 0 & 0 & 0 & 1 & 0 & 1 & 0 & 0 & 1 & 0 & 1 & 1 & 0 & 1 & 0 & 1 & 1 & 0 & 0 & 0 & 1 & 1 & 0 & 0 & 0 & 1 & 0 & 0 & 0 & 0 & 1 & 0 & 1 & 1 & 0 & 1 & 0 & 1 & 1 & 0 & 0 & 0 & 1 & 1 & 0 & 0 & 0 & 1 
\\
 0 & 1 & 0 & 1 & 1 & 0 & 0 & 0 & 0 & 1 & 0 & 1 & 1 & 0 & 1 & 0 & 1 & 1 & 0 & 0 & 0 & 1 & 1 & 0 & 0 & 0 & 1 & 0 & 0 & 0 & 0 & 1 & 0 & 1 & 1 & 0 & 1 & 0 & 1 & 1 & 0 & 0 & 0 & 1 & 1 & 0 & 0 & 0 & 1 & 1 & 0 & 0 & 0 & 1 & 0 & 1 
\\
 0 & 1 & 0 & 1 & 1 & 0 & 0 & 1 & 0 & 1 & 1 & 0 & 0 & 0 & 1 & 1 & 0 & 0 & 0 & 1 & 0 & 0 & 0 & 0 & 1 & 0 & 1 & 1 & 0 & 1 & 0 & 1 & 1 & 0 & 0 & 0 & 1 & 1 & 0 & 0 & 0 & 1 & 1 & 0 & 0 & 0 & 1 & 0 & 1 & 0 & 0 & 1 & 0 & 1 & 1 & 0 
\\
 0 & 1 & 0 & 1 & 1 & 0 & 0 & 1 & 1 & 0 & 0 & 0 & 1 & 0 & 0 & 0 & 0 & 1 & 0 & 1 & 1 & 0 & 1 & 0 & 1 & 1 & 0 & 0 & 0 & 1 & 1 & 0 & 0 & 0 & 1 & 1 & 0 & 0 & 0 & 1 & 0 & 1 & 0 & 0 & 1 & 0 & 1 & 1 & 0 & 1 & 0 & 1 & 1 & 0 & 0 & 0 
\\
 0 & 0 & 1 & 0 & 1 & 1 & 0 & 0 & 0 & 0 & 1 & 0 & 1 & 1 & 0 & 0 & 1 & 0 & 1 & 1 & 0 & 0 & 1 & 0 & 1 & 1 & 0 & 0 & 1 & 0 & 1 & 1 & 0 & 0 & 0 & 0 & 1 & 1 & 0 & 0 & 0 & 1 & 1 & 1 & 0 & 0 & 0 & 1 & 0 & 1 & 0 & 0 & 0 & 1 & 0 & 1 
\\
 0 & 0 & 1 & 0 & 1 & 1 & 0 & 0 & 0 & 1 & 0 & 1 & 1 & 0 & 0 & 1 & 0 & 1 & 1 & 0 & 0 & 1 & 0 & 1 & 1 & 0 & 0 & 0 & 0 & 1 & 1 & 0 & 0 & 0 & 1 & 1 & 1 & 0 & 0 & 0 & 1 & 0 & 1 & 0 & 0 & 0 & 1 & 0 & 1 & 0 & 0 & 0 & 1 & 0 & 1 & 1 
\\
 0 & 0 & 1 & 0 & 1 & 1 & 0 & 0 & 1 & 0 & 1 & 1 & 0 & 0 & 1 & 0 & 1 & 1 & 0 & 0 & 0 & 0 & 1 & 1 & 0 & 0 & 0 & 1 & 1 & 1 & 0 & 0 & 0 & 1 & 0 & 1 & 0 & 0 & 0 & 1 & 0 & 1 & 0 & 0 & 0 & 1 & 0 & 1 & 1 & 0 & 0 & 1 & 0 & 1 & 1 & 0 
\\
 0 & 0 & 1 & 0 & 1 & 1 & 0 & 1 & 0 & 1 & 1 & 0 & 0 & 0 & 0 & 1 & 1 & 0 & 0 & 0 & 1 & 1 & 1 & 0 & 0 & 0 & 1 & 0 & 1 & 0 & 0 & 0 & 1 & 0 & 1 & 0 & 0 & 0 & 1 & 0 & 1 & 1 & 0 & 0 & 1 & 0 & 1 & 1 & 0 & 0 & 1 & 0 & 1 & 1 & 0 & 0 
\\
 0 & 0 & 1 & 0 & 1 & 1 & 0 & 0 & 1 & 1 & 0 & 0 & 0 & 1 & 1 & 1 & 0 & 0 & 0 & 1 & 0 & 1 & 0 & 0 & 0 & 1 & 0 & 1 & 0 & 0 & 0 & 1 & 0 & 1 & 1 & 0 & 0 & 1 & 0 & 1 & 1 & 0 & 0 & 1 & 0 & 1 & 1 & 0 & 0 & 1 & 0 & 1 & 1 & 0 & 0 & 0 
\\
 0 & 0 & 1 & 0 & 1 & 1 & 0 & 1 & 1 & 0 & 0 & 0 & 1 & 0 & 1 & 0 & 0 & 0 & 1 & 0 & 1 & 0 & 0 & 0 & 1 & 0 & 1 & 1 & 0 & 0 & 1 & 0 & 1 & 1 & 0 & 0 & 1 & 0 & 1 & 1 & 0 & 0 & 1 & 0 & 1 & 1 & 0 & 0 & 0 & 0 & 1 & 1 & 0 & 0 & 0 & 1 
\\
 0 & 0 & 1 & 0 & 1 & 1 & 0 & 1 & 0 & 0 & 0 & 1 & 0 & 1 & 0 & 0 & 0 & 1 & 0 & 1 & 1 & 0 & 0 & 1 & 0 & 1 & 1 & 0 & 0 & 1 & 0 & 1 & 1 & 0 & 0 & 1 & 0 & 1 & 1 & 0 & 0 & 0 & 0 & 1 & 1 & 0 & 0 & 0 & 1 & 1 & 1 & 0 & 0 & 0 & 1 & 0 
\end{array}\right]
\]
The inner product of any two rows from any set of four column blocks is in the set $\{4,6\}$. 
\end{example}

\begin{theorem}\label{equal2}
Let $p$ be a positive integer such that $p\equiv3\pmod{4}$.    
The following are equivalent. 
\begin{enumerate}
\item There exists a balancedly multi-splittable $p^2\times p(p+1)$ partial Hadamard matrix. 
\item There exists a balancedly multi-splittable BIBD with parameters $$\left(p^2,p^2+p,1/2(p^2-1),1/2(p^2-p),1/4(p^2-p-2)\right).$$
\end{enumerate}
\end{theorem}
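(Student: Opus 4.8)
The plan is to run both implications through the affine bijection between $(1,-1)$-matrices and $(0,1)$-matrices, $H\leftrightarrow D=\tfrac12(J-H)$ (replace $+1$ by $0$ and $-1$ by $1$), and to convert every structural requirement into a statement about inner products of rows via the elementary counting identity
\[
\#\{j:\ D_{xj}=D_{yj}=1\}=\tfrac14\bigl(m-\sigma_x-\sigma_y+\langle R_x,R_y\rangle\bigr),
\]
valid over any fixed set of $m$ columns, where $R_x,R_y$ are the corresponding rows of $H$ and $\sigma_x,\sigma_y$ their sums over those columns. I would also record once and for all that the stated parameters satisfy the admissibility relations $bk=vr$ and $\lambda(v-1)=r(k-1)$, so that a $2$-design with every pair of points in $\lambda$ blocks is indeed a BIBD.

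For (i)$\Rightarrow$(ii): starting from a balancedly multi-splittable $H=[H_1\mid\cdots\mid H_{p+1}]$, first replace $H$ by $H'=PHQ$ as in Theorem~\ref{thm:main2}(i), where $Q=\mathrm{diag}(D_1,\dots,D_{p+1})$ is block-diagonal (so it fixes the column partition and leaves each $H_iH_i^\top$, hence each $H_IH_I^\top$, unchanged) and $P$ is a signed permutation of the rows (which only conjugates each $H_IH_I^\top$ by a signed permutation). Thus $H'$ is still balancedly multi-splittable, is row- and column-regular, and each block $H_iD_i$ has constant row sum $1$ and constant column sum $p$. Put $D=\tfrac12(J-H')=[D_1\mid\cdots\mid D_{p+1}]$: the column sum $p$ of each $H_iD_i$ gives every column of $D$ exactly $\tfrac12(p^2-p)$ ones, the row sum $p+1$ of $H'$ gives every row of $D$ exactly $\tfrac12(p^2-1)$ ones, and each $D_i$ has constant row sum $\tfrac{p-1}2$. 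For distinct rows $x,y$, by \eqref{eq:p5} and the analysis preceding it in the proof of Theorem~\ref{thm:main} there is a unique block index $i_0$ with $\langle r_{x,i_0},r_{y,i_0}\rangle=p$ and $\langle r_{x,i},r_{y,i}\rangle=-1$ for $i\ne i_0$, so $\langle R_x,R_y\rangle=0$; plugging $m=p(p+1)$, $\sigma_x=\sigma_y=p+1$ into the identity gives $\lambda=\tfrac14(p^2-p-2)$, so $D$ is a BIBD with the stated parameters. Applying the identity instead to the $\tfrac{p+1}2$ column blocks indexed by $I\in\binom{[p+1]}{(p+1)/2}$, where now $\sum_{i\in I}\langle r_{x,i},r_{y,i}\rangle$ equals $+\tfrac{p+1}2$ if $i_0\in I$ and $-\tfrac{p+1}2$ otherwise, and $m=\tfrac{p(p+1)}2$, $\sigma_x=\sigma_y=\tfrac{p+1}2$, yields that each pair of points occurs in $\tfrac{p^2-1}8$ blocks (when $i_0\in I$) or $\tfrac{p^2-2p-3}8$ blocks (when $i_0\notin I$), which is exactly the balancedly multi-splittable condition for $D$.

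For (ii)$\Rightarrow$(i): given a balancedly multi-splittable BIBD $D=[D_1\mid\cdots\mid D_{p+1}]$ with the stated parameters, set $H=J-2D=[H_1\mid\cdots\mid H_{p+1}]$. Then $H$ is a $(1,-1)$-matrix, each $H_i$ has constant row sum $p-2\cdot\tfrac{p-1}2=1$, and each row of $H$ has sum $p(p+1)-2r=p+1$. For distinct rows, the identity with $m=p(p+1)$, $\sigma_x=\sigma_y=p+1$ and the BIBD value $\lambda=\tfrac14(p^2-p-2)$ forces $\langle R_x,R_y\rangle=0$, so $HH^\top=p(p+1)I_{p^2}$ and $H$ is a partial Hadamard matrix. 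Restricting to the columns of $D_{i_1},\dots,D_{i_{(p+1)/2}}$, where the row sum of $H$ is $\tfrac{p+1}2$, the number of columns is $\tfrac{p(p+1)}2$, and the prescribed pair-co-occurrence is $\mu\in\{\tfrac{p^2-1}8,\tfrac{p^2-2p-3}8\}$, the identity forces the corresponding off-diagonal entry of $H_IH_I^\top$ to be $+\tfrac{p+1}2$ in the first case and $-\tfrac{p+1}2$ in the second. Hence every such submatrix is balancedly splittable, and $H$ is a balancedly multi-splittable $p^2\times p(p+1)$ partial Hadamard matrix.

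The only ingredient beyond arithmetic is the invocation of Theorem~\ref{thm:main2}(i) in the forward direction: without that normalization the columns of $H$ need not have constant sum, so $D$ need not even have constant block size, and the individual $D_i$ need not have constant row sum $\tfrac{p-1}2$. Everything else is bookkeeping; I expect the only point that needs genuine care to be matching the case split $i_0\in I$ versus $i_0\notin I$ (respectively the two possible values of $\mu$ in the converse) with the two prescribed pair-counts $\tfrac{p^2-1}8$ and $\tfrac{p^2-2p-3}8$, rather than any structural difficulty.
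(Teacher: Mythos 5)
Your proposal is correct and follows essentially the same route as the paper: pass to the normalized matrix $H'$ of Theorem~\ref{thm:main2}(i) (the paper builds this regularity in as a standing assumption stated just before the definition), translate via $D=\tfrac12(J-H')$ and $H=J-2D$, and convert the pair-count conditions into the inner-product values $\pm\tfrac{p+1}{2}$ over half-block column sets. Your write-up is in fact somewhat more explicit than the paper's (which only sketches (ii)$\Rightarrow$(i)), but there is no substantive difference in method.
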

\begin{proof}[Proof of (i) $\Rightarrow$ (ii)]
Let $H$ be a balancedly multi-splittable $p^2\times p(p+1)$ partial Hadamard matrix for the following block form: 
$$
H=\begin{bmatrix} H_1 & \cdots & H_{p+1}  
\end{bmatrix},
$$ 
where each $H_i$ is a $p^2\times p$ matrix. 
Let   $$D=1/2(J_{p^2\times p(p+1)}-H)=\begin{bmatrix} D_1 & \cdots & D_{p+1}  
\end{bmatrix}.$$
Each of $D_i$'s is a $(0,1)$-matrix, and 
$$DD^\top=\frac{1}{4}\left((P^2-p-2)J_{p^2}+(p+p^2)I_{p^2}\right).$$
 Therefore, $D$ is the incidence matrix of a  
 $$\text{BIBD}\left(p^2,p^2+p,1/2(p^2-1),1/2(p^2-p),1/4(p^2-p-2)\right).$$
 Furthermore, let  $D_{(p+1)/2}=\begin{bmatrix}D_{i_1} & \cdots & D_{i_{(p+1)/2}}\end{bmatrix}$, for any $(p+1)/2$-elements subset $\{i_1,\ldots,i_{(p+1)/2}\}$ of $\{1,2,\ldots,p+1\}$. Let $r_i$ and $r_j$ be two distinct rows in 
 $D_{(p+1)/2}$ and let $R_i$ and $R_j$ be two rows of the corresponding $H_t$'s matrices. Then 
 $r_i=1/2(e_p-R_i)$, and $r_j=1/2(e_p-R_j)$, where $e_x$ denotes the all one vector of dimension $x$. 
 Noting that 
 
\[ 
  \langle r_{i}, r_j\rangle=\begin{cases}
  \frac{p^2-1}{8} & \text{if} \langle R_i,R_i\rangle=1/2(p+1),\\
 \frac{p^2-2p-3}{8} & \text{if} \langle R_i,R_i\rangle=-1/2(p+1),
 \end{cases}
 \]
which shows that $D$ is balancedly multi-splittable. 
\end{proof}

\begin{proof}[Proof of (ii) $\Rightarrow$ (i)]

The proof essentially reverses the process in the proof of (i) $\Rightarrow$ (ii).
Let $D=\begin{bmatrix} D_1 & \cdots & D_{p+1}  \end{bmatrix}$ be the balanced multi-splittable BIBD with the given parameters. Let \[H=J_{p^2\times p(p+1)}=J_{p^2\times p(p+1)}-2D=\begin{bmatrix} H_1 & \cdots & H_{p+1}  \end{bmatrix}.\]
Then it is shown that $H$ is a partial Hadamard matrix of order $p^2\times p(p+1)$.  Let $H_{(p+1)/2}=\begin{bmatrix}H_{i_1} & \cdots & H_{i_{(p+1)/2}}\end{bmatrix}$, for any $(p+1)/2$-elements subset $\{i_1,\ldots,i_{(p+1)/2}\}$ of $\{1,2,\ldots,p+1\}$ corresponding to the columns partition in $D$. It is shown that the inner product of any two rows in $H_{(p+1)/2}$ is $\frac{p+1}{2}$ or $-\frac{p+1}{2}$, depending on the inner product of corresponding rows in $D_i$'s is $\frac{p^2-1}{8}$ or $\frac{p^2-2p-3}{8}$ respectively. 
\end{proof}

\section{Equiangular lines}

One of the most critical aspects of balanced multi-splittable partial Hadamard matrices is the existence of a relatively large lower bound for the number of specific equiangular lines in the sense that the inner product of the distinct rows in any of the half of the blocks of the constructed partial Hadamard matrix remains the same in absolute values. 

\begin{lemma}

For an odd prime power $p$ there are $p^2$ equiangular lines in $\mathbb{R}^{\frac{p^2+p}{2}}$ corresponding to $\alpha =\frac{1}{p}$. Furthermore, there are $p^2+p \choose {\frac{p^2+p}{2}}$ choices of block columns of order $p^2\times p$ leading to the same number of equiangular lines.

\end{lemma}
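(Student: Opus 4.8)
The plan is to build the configuration straight from a balancedly multi-splittable partial Hadamard matrix and then read the equiangular lines off its restricted rows. First I would check that the hypotheses of Theorem~\ref{thm:main}(ii) are available for a prime power $p\equiv 3\pmod 4$: the affine plane of order $p$ gives an $\mathrm{OA}_1(p^2,p+1,p,2)$, and the (skew) Paley conference matrix $C$ of order $p+1$ yields the Hadamard matrix $I+C$ of order $p+1$, hence in particular a $p\times(p+1)$ partial Hadamard matrix. By Theorem~\ref{thm:main} there is therefore a balancedly multi-splittable $p^2\times p(p+1)$ partial Hadamard matrix $H=\begin{bmatrix}H_1 & \cdots & H_{p+1}\end{bmatrix}$ with each $H_i$ of size $p^2\times p$. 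As in the preceding sections one takes $p\equiv 3\pmod 4$; this is exactly what keeps $H$, and hence the lines, real.

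Next, fix a $(p+1)/2$-element subset $I\subseteq\{1,\dots,p+1\}$ and let $H_I$ be the $p^2\times\frac{p^2+p}{2}$ submatrix formed by the blocks $H_i$ with $i\in I$. Write $R_1,\dots,R_{p^2}$ for its rows; each is a $\pm 1$-vector of length $\frac{p^2+p}{2}$, so $\langle R_x,R_x\rangle=\frac{p^2+p}{2}$, while the defining property of balanced multi-splittability gives $\langle R_x,R_y\rangle\in\{\tfrac{p+1}{2},-\tfrac{p+1}{2}\}$ whenever $x\neq y$. Putting $v_x=\bigl(\tfrac{2}{p^2+p}\bigr)^{1/2}R_x\in\mathbb{R}^{(p^2+p)/2}$ gives $\|v_x\|=1$ and $|\langle v_x,v_y\rangle|=\frac{(p+1)/2}{(p^2+p)/2}=\frac1p$ for $x\neq y$. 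Since $\tfrac1p<1$, no two of the $v_x$ are parallel, so $\mathbb{R}v_1,\dots,\mathbb{R}v_{p^2}$ are $p^2$ distinct lines, and by construction they are equiangular with common parameter $\alpha=\frac1p$, all lying in $\mathbb{R}^{(p^2+p)/2}$.

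Finally, the computation above used nothing about $I$ beyond $|I|=(p+1)/2$, so each of the $\binom{p+1}{(p+1)/2}$ ways of choosing which $(p+1)/2$ of the $p^2\times p$ block columns of $H$ to retain yields, verbatim, a set of $p^2$ equiangular lines at angle $\arccos\frac1p$ in $\mathbb{R}^{(p^2+p)/2}$, which is the asserted number of choices. I do not expect any genuine obstacle: the only points needing a word are that the $p^2$ restricted rows give \emph{distinct} lines (immediate from $|\langle v_x,v_y\rangle|=1/p<1$) and that the two combinatorial ingredients of Theorem~\ref{thm:main} exist for prime powers $p\equiv 3\pmod 4$ (classical). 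If in addition one wanted the lines to span $\mathbb{R}^{(p^2+p)/2}$ rather than merely to sit inside it, one would note via the row- and column-regular normal form of Theorem~\ref{thm:main2} that $H_I$ has rank $\frac{p^2+p}{2}$, but the statement as phrased does not require this.
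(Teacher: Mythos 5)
Your proposal is correct and follows essentially the same route as the paper: obtain a balancedly multi-splittable $p^2\times p(p+1)$ partial Hadamard matrix via Theorem~\ref{thm:main}, restrict to any half of the block columns, and normalize the rows so that the off-diagonal inner products $\pm\frac{p+1}{2}$ against the norm $\frac{p^2+p}{2}$ give $\alpha=\frac{1}{p}$. The only divergences are cosmetic: you make explicit the ingredients the paper leaves implicit (the affine-plane OA, the Paley-type Hadamard matrix of order $p+1$, and the restriction to $p\equiv3\pmod4$), and you count $\binom{p+1}{(p+1)/2}$ admissible block selections, which is the count actually supported by the multi-splittability definition (the paper's $\binom{p^2+p}{(p^2+p)/2}$ reads as a typo).
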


\begin{proof}
A balancedly multi-splittable partial Hadamard matrix of order $p^2\times (p^2+p)$ is constructed in Theorem \ref{thm:main}. The inner product of any two rows of any of the $p^2+p \choose {\frac{p^2+p}{2}}$ choices of block columns is in $\{-\frac{p+1}{2},\frac{p+1}{2}\}$. The corresponding $\alpha = \frac{1}{p}$, and the result follows.
\end{proof}

An asymptotic result concerning the existence of equiangular lines follows. 
\begin{corol}\label{asymp}
The ratio of the number of equiangular lines constructed in Theorem \ref{thm:main} in $\mathbb{R}^{\frac{p^2+p}{2}}$ to the largest possible number of equiangular lines for $\alpha =\frac{1}{p}$ tends to one for large prime powers $p$.
\end{corol}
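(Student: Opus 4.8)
The plan is to pit the explicit construction of the preceding Lemma against the \emph{relative bound} (a.k.a.\ special bound) for equiangular lines, and show the two are asymptotically of the same order. Recall the relative bound: if there are $N$ equiangular lines in $\mathbb{R}^d$ with common angle $\arccos\alpha$ and $d\alpha^2<1$, then
\[
N\le \frac{d(1-\alpha^2)}{1-d\alpha^2}.
\]
I would first specialize to the parameters of the Lemma, namely $d=\tfrac{p^2+p}{2}=\tfrac{p(p+1)}{2}$ and $\alpha=\tfrac1p$, and check that the hypothesis holds: $d\alpha^2=\tfrac{p+1}{2p}<1$ for every prime power $p>1$, so the relative bound is in force.

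Next I would carry out the (short) computation of the right-hand side. The numerator is $\tfrac{p(p+1)}{2}\cdot\tfrac{p^2-1}{p^2}=\tfrac{(p+1)^2(p-1)}{2p}$ and the denominator is $1-\tfrac{p+1}{2p}=\tfrac{p-1}{2p}$, so the bound collapses to exactly $(p+1)^2$. Thus any family of equiangular lines at angle $1/p$ in $\mathbb{R}^{(p^2+p)/2}$ has at most $(p+1)^2$ members.

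Finally I would combine the two inequalities. Writing $M$ for the largest possible number of equiangular lines at angle $1/p$ in dimension $\tfrac{p^2+p}{2}$, the construction in Theorem~\ref{thm:main} (via the Lemma) gives $M\ge p^2$, while the relative bound just computed gives $M\le (p+1)^2$. Hence the ratio in question satisfies
\[
\frac{p^2}{(p+1)^2}\ \le\ \frac{p^2}{M}\ \le\ 1 ,
\]
and since $\tfrac{p^2}{(p+1)^2}=\bigl(1-\tfrac1{p+1}\bigr)^2\to 1$ as $p\to\infty$ along prime powers, the squeeze theorem yields the claim.

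The truth is there is no serious obstacle here; the only point requiring a word of care is the interpretation of ``the largest possible number,'' i.e.\ making explicit that in the regime $d<1/\alpha^2$ the relative bound is the operative upper bound, so that the genuine maximum $M$ is trapped between the construction count $p^2$ and the bound $(p+1)^2$ — which is exactly what the limit argument needs. One could alternatively phrase the corollary directly in terms of the relative bound, in which case the ratio is literally $p^2/(p+1)^2$ and the statement is immediate.
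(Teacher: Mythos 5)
Your proposal is correct and takes essentially the same route as the paper: the paper's proof simply asserts that the largest possible number of equiangular lines at $\alpha=\tfrac{1}{p}$ in $\mathbb{R}^{(p^2+p)/2}$ is $(p+1)^2$ (exactly the relative bound you compute) and compares it with the $p^2$ constructed lines. Your extra care---deriving $(p+1)^2$ explicitly from the relative bound after checking $d\alpha^2=\tfrac{p+1}{2p}<1$, and squeezing $p^2/M$ between $p^2/(p+1)^2$ and $1$ in case the true maximum $M$ falls short of that bound---only makes explicit what the paper leaves implicit.
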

\begin{proof}
The largest possible number of equiangular lines in $\mathbb{R}^{\frac{p^2+p}{2}}$ corresponding to $\alpha=\frac{1}{p}$ is 
$(p+1)^2$. The required ratio is $(\frac{p+1}{p})^2$, which is close to one for large values of $p$.

\end{proof}

Following the lines of argument in the proof of (i)$\Rightarrow$(ii) in Theorem \ref{thm:main} (page 4), the following is concluded.


\begin{corol}\label{eqalines}
\begin{enumerate}
    \item
    A $(1,-1)$-row vector $r=(r_i)_{i=1}^{p+1}$ of length $p(p+1)$ such that the row vectors of the matrix
    $\begin{bmatrix}
 H_{i_1} & \cdots & H_{i_{(p+1)/2}}\\
 r_{i_1} & \cdots & r_{i_{(p+1)/2}}
\end{bmatrix}$ forms a set of equiangular lines for any $(p+1)/2$-elements subset $\{i_1,\ldots,i_{(p+1)/2}\}$ of $\{1,2,\ldots,p+1\}$ satisfies either
\begin{itemize}
    \item there exists $i$ such that $\pm r_i$ is some row of $H_i$ and $r_j=\mp {\bm 1}$ for any $j\in\{1,\ldots,p+1\}\setminus\{i\}$, or
    \item $r_1=\cdots=r_{p+1}=\pm{\bm 1}$.  
\end{itemize}
\item
There are no two rows vectors $r=(r_i)_{i=1}^{p+1},r'=(r'_{i})_{i=1}^{p+1}$ such that the row vectors of the matrix
    $\begin{bmatrix}
 H_{i_1} & \cdots & H_{i_{(p+1)/2}}\\
 r_{i_1} & \cdots & r_{i_{(p+1)/2}}\\
 r'_{i_1} & \cdots & r'_{i_{(p+1)/2}}
\end{bmatrix}$ forms a set of equiangular lines for any $(p+1)/2$-elements subset $\{i_1,\ldots,i_{(p+1)/2}\}$ of $\{1,2,\ldots,p+1\}$
\end{enumerate}
\end{corol}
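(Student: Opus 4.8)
The plan is to run, essentially verbatim, the Johnson-scheme argument from the proof of Theorem~\ref{thm:main} on each candidate row, and then to glue the resulting local data together using the orthogonal-array structure. Throughout I would work with $H=\begin{bmatrix}H_1 & \cdots & H_{p+1}\end{bmatrix}$ in the normal form produced by the proofs of Theorems~\ref{thm:main} and~\ref{thm:main2}: each $H_i$ has exactly $p$ distinct rows $\tilde r_{1,i},\ldots,\tilde r_{p,i}$, each occurring $p$ times, with $\langle\tilde r_{k,i},\tilde r_{l,i}\rangle=p$ for $k=l$ and $-1$ otherwise, and $\sum_{k=1}^{p}\tilde r_{k,i}={\bm 1}$ (so each $\tilde r_{k,i}$ has row sum $1$); moreover the map sending a row $x$ of $H$ to $(c_1(x),\ldots,c_{p+1}(x))$, where $r_{x,i}=\tilde r_{c_i(x),i}$, is a bijection onto an $\mathrm{OA}_1(p^2,p+1,p,2)$.

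For part~(i), I would fix a row $x$ of $H$ and set $a^{(x)}_i:=\langle r_i,r_{x,i}\rangle$. All off-diagonal Gram entries of the displayed $(p^2+1)\times\frac{p(p+1)}{2}$ matrix have absolute value $\frac{p+1}{2}$ (for the $H$-rows automatically, for the pairs involving $r$ by the equiangularity hypothesis), so $\sum_{i\in I}a^{(x)}_i\in\{\pm\frac{p+1}{2}\}$ for every $I\in\binom{[p+1]}{(p+1)/2}$; this is exactly~\eqref{eq:p1} with ${\bm a}$ replaced by ${\bm a}^{(x)}:=(a^{(x)}_i)_i$, so the argument from~\eqref{eq:p1} onward (reduction to~\eqref{eq:p3}, Claim~1, the width/dual-width computation in $J(p+1,(p+1)/2)$, the two degenerate cases $\chi\in\{{\bm 0},{\bm 1}\}$, and the solution list) applies unchanged and gives ${\bm a}^{(x)}\in\mathcal F:=\{\pm{\bm 1}\}\cup\{(p+1)e_j-{\bm 1}:j\in[p+1]\}\cup\{{\bm 1}-(p+1)e_j:j\in[p+1]\}$. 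Now $a^{(x)}_i=\langle r_i,\tilde r_{c_i(x),i}\rangle=:f_i(c_i(x))$ depends on $x$ only through $c_i(x)$, and $f_i(k)=\pm p$ forces $r_i=\pm\tilde r_{k,i}$. The key point is that at most one pair $(i_0,k_0)$ has $|f_{i_0}(k_0)|=p$: two such pairs with distinct first coordinates would, for the codeword realising those two symbols, produce an ${\bm a}^{(x)}$ with two entries of absolute value $p$, impossible in $\mathcal F$; two such pairs with the same first coordinate would force $\tilde r_{k,i}=\pm\tilde r_{k',i}$, contradicting $\langle\tilde r_{k,i},\tilde r_{k',i}\rangle=-1$ since $p\ge3$. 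If no such pair exists, then all $f_i(k)\in\{\pm1\}$, hence every ${\bm a}^{(x)}\in\{\pm{\bm 1}\}$; fixing columns $i\ne i'$ and a symbol $k$ in column $i$ and letting $x$ range over the codewords with $c_i(x)=k$, strength $2$ makes $c_{i'}(x)$ hit every symbol, which forces $f_{i'}$ to be constant equal to $f_i(k)$, and varying the column forces all $f_i$ to equal a common $\sigma\in\{\pm1\}$; since $\{\tilde r_{k,i}\}_k$ has Gram matrix $(p+1)I_p-J_p$ with inverse $\frac1{p+1}(I_p+J_p)$, one gets $r_i=\sigma\sum_k\tilde r_{k,i}=\sigma{\bm 1}$, which is alternative~(b). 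If the pair $(i_0,k_0)$ does exist, say $f_{i_0}(k_0)=p$ (the sign $-p$ being symmetric), then $r_{i_0}=\tilde r_{k_0,i_0}$ is a row of $H_{i_0}$, $f_{i_0}(k)=-1$ for $k\ne k_0$, and $f_i(k)\in\{\pm1\}$ for $i\ne i_0$; testing ${\bm a}^{(x)}$ according to whether $c_{i_0}(x)=k_0$ (forcing ${\bm a}^{(x)}=(p+1)e_{i_0}-{\bm 1}$) or not (forcing ${\bm a}^{(x)}=-{\bm 1}$) gives $f_i\equiv-1$ for all $i\ne i_0$, so $r_i=-{\bm 1}$ there, which is alternative~(a).

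For part~(ii), I would argue as follows. If $r,r'$ both augment $H$ as required, deleting the other row shows each of them alone does, so by part~(i) both lie in $\mathcal R:=\{\pm R\}\cup\{\pm A_{i_0,k_0}:i_0\in[p+1],\,k_0\in[p]\}$, where $R=({\bm 1},\ldots,{\bm 1})$ and $A_{i_0,k_0}$ has $\tilde r_{k_0,i_0}$ in block $i_0$ and $-{\bm 1}$ in every other block; moreover $\langle r|_I,r'|_I\rangle\in\{\pm\frac{p+1}{2}\}$ for every $I$. Writing $s_i=\langle r_i,r'_i\rangle$ and using $\langle{\bm 1},\pm\tilde r_{k,i}\rangle=\pm1$, $\langle\tilde r_{k,i},\pm\tilde r_{l,i}\rangle\in\{\pm1\}$ for $k\ne l$, and $\langle{\bm 1},-{\bm 1}\rangle=-p$, a short case analysis over $\{r,r'\}$ (both of type $R$; one of each type; both of type $A$, with subcases $i_0=i_1$ and $i_0\ne i_1$) shows that in every case at least $p-1$ of the coordinates $s_i$ share a common value in $\{p,-p\}$. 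Since $p-1\ge\frac{p+1}{2}$ for $p\ge3$, one can choose $I$ of size $\frac{p+1}{2}$ among those coordinates, whence $\bigl|\sum_{i\in I}s_i\bigr|=\frac{p(p+1)}{2}>\frac{p+1}{2}$, contradicting the equiangularity; so no such $r,r'$ exist.

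I expect the main obstacle to be the gluing step in part~(i): the Johnson-scheme input only constrains ${\bm a}^{(x)}$ one row at a time, and converting this into the global shape of $r$ requires carefully combining the ``at most one exceptional pair'' dichotomy with the strength-$2$ structure of the orthogonal array, together with the normalization $\sum_k\tilde r_{k,i}={\bm 1}$ that makes the identity $r_i=\sum_k\tilde r_{k,i}$ read as $r_i={\bm 1}$. Part~(ii) is then a routine finite verification once part~(i) is available.
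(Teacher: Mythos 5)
Your proposal is correct and follows essentially the same route the paper intends: apply the Johnson-scheme width/dual-width argument of Theorem~\ref{thm:main} to the inner-product vector of $r$ against each row of $H$, and then glue the per-row conclusions using the strength-$2$ structure of the OA$_1(p^2,p+1,p,2)$ together with the normalization $\sum_k\tilde r_{k,i}={\bm 1}$, exactly as in the paper's (only sketched) justification. Your part~(ii), reducing to the classified forms from part~(i) and exhibiting $\tfrac{p+1}{2}$ blocks with inner products of absolute value $p$, is the routine verification the paper omits and is carried out correctly.
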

 \begin{remark}
 It follows from Corollary \ref{eqalines} that adding more than one line may be possible only for $p=3$. Each of the six choices of nine lines can be extended to the optimal set of sixteen lines in $\mathbb{R}^6$, as shown in the following example. \end{remark}
 \begin{example}
 A balanced multi-splittable $9\times 12$ partial Hadamard matrix:
 \[
\left( \begin{array}{ccc|ccc|ccc|ccc}
 1&1&-  &1&1&- & 1&1&- & 1&1&-\\
  1&1&-  &1&-&1 & 1&-&1 & 1&-&1\\
 1&1&-  &-&1&1 & -&1&1 & -&1&1\\
 1&-&1  &1&1&- & 1&-&1 & -&1&1\\
 1&-&1  &-&1&1 & 1&1&- & 1&-&1\\
 1&-&1  &1&-&1 & -&1&1 & 1&1&-\\
 -&1&1  &1&1&- & -&1&1 & 1&-&1\\
 -&1&1  &1&-&1 & 1&1&- & -&1&1\\
 -&1&1  &-&1&1 & 1&-&1 & 1&1&-
 \end{array}\right)\]
 Any of the possible six selections of two block columns provide nine sets of equiangular lines with the angle $\arccos( \frac{1}{3})$ in $\mathbb{R}^6$. All nine equiangular lines can be extended to sixteen equiangular lines in $\mathbb{R}^6$ by adding the following seven lines.
 \[
 \begin{array}{ccc|ccc}
 - & - & - & 1 & 1 & - \\
  1& 1 & - & - & - & - \\
 - & - & - & 1 & - & 1\\
 1 & - & 1 & - & - & - \\
 - & - & - & - & 1 & 1 \\
 - & 1 & 1 & - & - & - \\
 - & - & - & - & - & - \\
 \end{array} \]
 \end{example}

\section*{Acknowledgments.}
Hadi Kharaghani is supported by the Natural Sciences and
Engineering  Research Council of Canada (NSERC).  Sho Suda is supported by JSPS KAKENHI Grant Number 22K03410.
Yash Khobragade was an MITAC Summer 2023 Research Intern at the University of Lethbridge.



\begin{thebibliography}{99}

\bibitem{BGKM}
A.~E.~Brouwer, C.~D.~Godsil, J.~H.~Koolen, W.~J.~Martin, Width and dual width of subsets in polynomial association schemes, 
{\sl J. Combin. Theory Ser. A}, {\bf 102} (2003), 255--271.

\bibitem{handbook}
C.~J. Colbourn and J.~H. Dinitz, editors.
\newblock \emph{Handbook of combinatorial designs}.
\newblock Discrete Mathematics and its Applications (Boca Raton). Chapman \&
  Hall/CRC, Boca Raton, FL, second edition, 2007.
  
\bibitem{split-od}
Hadi Kharaghani, Thomas Pender, Sho Suda, Balancedly splittable orthogonal designs and equiangular tight frames, Des. Codes Cryptogr., 89 (2021), no. 9, 2033--2050. 

\bibitem{split-h}
Hadi Kharaghani, Sho Suda, Balancedly splittable Hadamard matrices, Discrete Math., 342 (2019), no. 2, 546--561.

\bibitem{KS}
H. Kharaghani,  S. Suda, Hadamard matrices related to projective planes, 
{\sl Electron. J. Comb.} {\bf 30} (2) (2023), P2.49.

\bibitem{stinson}
Douglas Stinson, Combinatorial Designs, Springer-Verlag, New York, 2004.
\end{thebibliography}
\end{document}